\providecommand{\tabularnewline}{\\}
\numberwithin{equation}{section}
\numberwithin{figure}{section}
\providecommand{\tabularnewline}{\\}
\numberwithin{equation}{section}
\newtheorem{example}[equation]{Example}
\newtheorem{definition}[equation]{Definition}
\newtheorem{theorem}[equation]{Theorem}
\newtheorem{lemma}[equation]{Lemma}
\newtheorem{proposition}[equation]{Proposition}
\newcommand{\J}{\mathcal{J}}
\newcommand{\ka}{\textbf{\texttt{k}}}
\newcommand{\car}{{\textrm{char }}}
\begin{document}
\include{comandos} \subjclass[2010]{Primary 17C55, 17C10,17C27.}

\keywords{Jordan algebra, classification of algebras, algebra invariants}

\title{Four dimensional Jordan algebras}

\author{Mar\'{\i}a Eugenia Martin}

\address{María Eugenia Martin. \newline Instituto de Matemática e Estat\'{i}stica.
\newline  Universidade de São Paulo.\newline São Paulo, SP, Brasil.}

\email{eugenia@ime.usp.br}
\begin{abstract}
In this paper, we classify four-dimensional Jordan algebras over an
algebraically closed field of characteristic different of two. We
establish the list of $73$ non-isomorphic Jordan algebras. 
\end{abstract}
\maketitle

\section{Introduction}

The goal of this paper is to obtain an algebraic classification of
Jordan algebras of dimension four over algebraically closed field
$\ka$ such that $\car\ka\neq2$. Our motivation to obtain such algebras
comes out from the intention to study the variety of four-dimensional
Jordan algebras. In order to understand this variety we need the list
of all Jordan algebras, associative and nonassociative, unitary and
non-unitary.

The classification problem of the algebraic structures of given dimension,
i.e. find the list of all non-isomorphic objects in this structure,
has been extensively studied. The only class of algebras (associative,
Jordan and Lie) which is completely described is the one of simple
algebras. In general, the list of all algebras is known only for small
dimensions. In 1975, P. Gabriel presented in \cite{finiterepresentatiotypeisopen}
the lists of all unitary associative algebras over an algebraically
closed field of dimension $n$ for $n\leq4$. Four years later, G.
Mazzola, in his work \cite{mazzola}, classified $5$-dimensional
unitary associative algebras over an algebraically closed field and
in \cite{happel} D. Happel obtained the list of unitary associative
algebras of dimension $6$. For Lie algebras the classification is
known for dimension till $6$, see \cite{kirillov}. H. Wesseler in
\cite{wesseler} described unitary Jordan algebras over algebraically
closed field up to dimension $6$. In particular he showed that all
such algebras are special. In 1989, H. Sherkulov in \cite{TeseRuso}
classified nonassociative Jordan algebras up to dimension $4$ and
in 2011, in their article \cite{ancocheabermudes}, Ancochea Bermudez
and others classified the laws of three-dimensional and four-dimensional
nilpotent Jordan algebras over the field of complex numbers.

In this work we generalize these papers and classify Jordan algebras
(both unital and nonunital, associative and nonassociative) of dimension
four over algebraically closed field of characteristic different of
two. In the forthcoming paper we will use this description to study
deformations between Jordan algebras and describe the variety $Jor_{4}$.

The paper is organized as follows. In Section \ref{sec:Preliminaries},
we recall the basic concepts and necessary results for finite-dimensional
Jordan algebras. In Section \ref{sec:Indecomposable-Jordan-algebras},
we list all indecomposable Jordan algebras of dimension less than
$4$. In Section \ref{sec:4-Jordan-algebras}, we describe non-isomorphic
Jordan algebras of dimension four and, finally, in section \ref{sec:Remarks}
we show that all algebras are pairwise nonisomorphic.

\section{Preliminaries\label{sec:Preliminaries}}

In this section we present the basic concepts, the notation and principal
results about Jordan algebras.

We will work only over an algebraically closed field $\ka$ of characteristic
$\neq2$ and, furthermore, all Jordan algebras are assumed to be finite
dimensional over $\ka$.

\begin{definition} A \textbf{Jordan $\ka$-algebra} is an algebra
$\J$ with a multiplication \textquotedbl{}$\cdot$\textquotedbl{}
satisfying the following identities for any $x,$ $y\in\J$: 
\begin{align}
x\cdot y & =y\cdot x\mbox{,}\nonumber \\
((x\cdot x)\cdot y)\cdot x & =(x\cdot x)\cdot(y\cdot x).\label{eq:identidadejor}
\end{align}
\end{definition} We will also use a linearization of Jordan identity
\eqref{eq:identidadejor} 
\begin{equation}
(x,y,z\cdot w)+(w,y,z\cdot x)+(z,y,x\cdot w)=0\mbox{,}\label{eq:linearjord}
\end{equation}
for any $x,\mbox{ }y,\mbox{ }z,\mbox{ }w\in\J$. Here $(x,y,z):=(x\cdot y)\cdot z-x\cdot(y\cdot z)$
is the associator of $x,\, y,\, z$.

Let $\mathcal{A}$ be an associative algebra. We define on the underlying
vector space of the algebra $\mathcal{A}$ a new operation $\odot$
of multiplication given by the formula
\[
x\odot y=\frac{1}{2}(x\cdot y+y\cdot x),
\]
for any $x$, $y\in\mathcal{A}$, where $x\cdot y$ denotes the multiplication
in $\mathcal{A}$. The algebra $\left(\mathcal{A},\odot\right)$ is
a Jordan algebra and is denoted by $\mathcal{A}^{\left(+\right)}$.\begin{definition}
A Jordan algebra $\J$ is called \textbf{special} if there exists
an associative algebra $\mathcal{A}$ such that $\J$ is a Jordan
subalgebra of $\mathcal{A}^{\left(+\right)}$. Jordan algebras which
are not special are called \textbf{exceptional}. \end{definition}

\begin{example}Let $\left(\mathcal{U},j\right)$ be an associative
algebra with involution and
\[
H(\mathcal{U},j)=\left\{ u\in\mathcal{U}\mid u=j(u)\right\} ,
\]
be the set of elements symmetric with respect to $j$. Then $\left(H\left(\mathcal{U},j\right),\odot\right)$is
a subalgebra of $\mathcal{U}^{\left(+\right)}$ and thus $H(\mathcal{U},j)$
is a special Jordan algebra.\end{example}

In $\J$ we define inductively a series of subsets by setting 
\begin{gather*}
\J^{1}=\J^{\left\langle 1\right\rangle }=\J\text{,}\\
\J^{n}=\J^{n-1}\cdot\J+\J^{n-2}\cdot\J^{2}+\cdots+\J\cdot\J^{n-1},\\
\J^{\left\langle n\right\rangle }=\J^{\left\langle n-1\right\rangle }\cdot\J.
\end{gather*}
The subset $\J^{n}$ is called the \textbf{$n$-th power of the algebra
$\J$}. 

The chain of subsets $\J^{\left\langle 1\right\rangle }\supseteq\J^{\left\langle 2\right\rangle }\supseteq\cdots\supseteq\J^{\left\langle n\right\rangle }\supseteq\cdots$
is a chain of ideals of the algebra $\J$ and it is called the \textbf{lower
central series of $\J$}.

Since $\J^{i}=\J^{\left\langle i\right\rangle }$ for $i=1,2,3$ we
will use only the $i$-th power notation in these cases.

\begin{definition} A Jordan algebra $\J$ is said to be \textbf{nilpotent}
if there exists an integer $n\in\mathbb{N}$ such that $\J^{\left\langle n\right\rangle }=0$.
The minimum $n$ for which this condition holds is the \textbf{nilindex}
of $\J$. \end{definition}

If $s$ is the nilindex of $\J$, we define the \textbf{nilpotency
type} of the algebra $\J$ as the sequence $(n_{1},n_{2},n_{3},\cdots,n_{s-1})$,
where $n_{i}=\dim\left(\J^{\left\langle i\right\rangle }/\J^{\left\langle i+1\right\rangle }\right)$.

We remark that all $n_{i}>0$. In fact, suppose that there exists
an $i\in\mathbb{N}$, $1\leq i\leq s-1$ such that $n_{i}=0$ then
$\dim\J^{\left\langle i\right\rangle }=\dim\J^{\left\langle i+1\right\rangle }$
and since $\J^{\left\langle i+1\right\rangle }\subseteq\J^{\left\langle i\right\rangle }$
we have $\J^{\left\langle i\right\rangle }=\J^{\left\langle i+1\right\rangle }$.
Consequently $\J^{\left\langle i+2\right\rangle }=\J^{\left\langle i\right\rangle }\cdot\J=\J^{\left\langle i\right\rangle }$,
by induction $\J^{\left\langle i\right\rangle }=\J^{\left\langle k\right\rangle }$
for every $k\in\mathbb{N},\mbox{ }k>i$. In particular it holds for
$k=s$, so $\J^{\left\langle i\right\rangle }=\J^{\left\langle s\right\rangle }=0$
which is impossible since $s$ is the nilindex of $\J$. \begin{definition}
A Jordan algebra $\J\neq0$ is called \textbf{simple} if $0$ and
$\J$ itself are the only ideals of $\J$. A Jordan algebra is \textbf{semisimple}
if it is a direct sum of simple algebras.\end{definition} \begin{proposition}
\label{prop:quocienteSS}\cite[V.2, V.5 and VII.6]{jacobson}Every
finite-dimensional Jordan algebra has a unique maximal nilpotent ideal
which is called the \textbf{radical} of $\J$, $N=\operatorname*{Rad}(\J)$,
and the quotient $\J/N$ is semisimple. Moreover $\J_{ss}:=\J/N$
has an identity element and its simple decomposition is unique. If
$\ka=\overline{\ka}$ then $\J=\J_{ss}\oplus N$ (as vector spaces).
\end{proposition}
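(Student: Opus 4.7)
The statement bundles together four distinct claims (existence/uniqueness of the radical, semisimplicity of the quotient, existence of identity plus uniqueness of the simple decomposition, and the Wedderburn splitting over $\overline{\ka}$), so the plan is to prove each in turn, leveraging the finite dimension of $\J$ at every stage.

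First I would construct the radical. The key algebraic fact to establish is that the sum of two nilpotent ideals of $\J$ is again a nilpotent ideal; in Jordan algebras this is not automatic from the definition because powers behave via $\J^{\langle n\rangle}$ rather than via plain products, but one checks it by expanding $(I+K)^{\langle n\rangle}$ and using that each summand lies in some product alternating between factors of $I$ and $K$, which eventually lands inside $I^{\langle r\rangle}$ or $K^{\langle s\rangle}$ for large $n$. Once this is in hand, finite-dimensionality forces the collection of nilpotent ideals to have a unique maximal element $N=\operatorname*{Rad}(\J)$, obtained as the sum of them all.

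Next I would show $\J/N$ is semisimple. By construction $\J/N$ has no nonzero nilpotent ideals, so it suffices to prove that a finite-dimensional Jordan algebra with zero radical is a direct sum of simple ideals. For this I would pick a minimal ideal $S\subseteq \J/N$; minimality together with $S\not\subseteq \operatorname*{Rad}$ forces $S$ either to be simple or to satisfy $S^{\langle 2\rangle}=S$, and a Fitting-type argument on the operator $L_u$ for a suitable $u\in S$ produces an idempotent $e\in S$ whose Peirce decomposition splits off a complementary ideal. Iterating on the complement (which has smaller dimension) yields the semisimple decomposition; the unit of $\J_{ss}$ is then the sum of the units of the simple summands. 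Uniqueness of the simple decomposition follows because the simple summands are exactly the minimal ideals of $\J_{ss}$, which are intrinsically defined.

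The hard part, and the place where the hypothesis $\ka=\overline{\ka}$ enters, is the Wedderburn principal theorem: producing a subalgebra of $\J$ isomorphic to $\J_{ss}$ that is complementary to $N$. My plan is to argue by induction on $\dim N$. If $N^{\langle 2\rangle}=0$ then $N$ becomes a bimodule over $\J_{ss}$ and the splitting is controlled by the first Jordan cohomology $H^{1}(\J_{ss},N)$, which vanishes for separable semisimple Jordan algebras over a field of characteristic $\neq 2$; over an algebraically closed field every semisimple $\J_{ss}$ is separable, so the obstruction disappears and a splitting exists. In the general case, apply the induction hypothesis to $\J/N^{\langle 2\rangle}$ to pull back a lift of $\J_{ss}$ modulo $N^{\langle 2\rangle}$, then apply the $N^{\langle 2\rangle}=0$ case inside $\J$ to refine this lift to an honest subalgebra complement. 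Combining the lift with $N$ gives the vector-space decomposition $\J=\J_{ss}\oplus N$.

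Throughout, the main obstacle is the cohomological vanishing step: one must know both that finite-dimensional semisimple Jordan algebras over $\overline{\ka}$ are separable and that separability implies $H^{1}(\J_{ss},M)=0$ for every $\J_{ss}$-bimodule $M$; these are the substantive inputs from the Albert--Jacobson structure theory and are precisely what the cited references in \cite{jacobson} supply.
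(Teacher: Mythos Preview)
The paper does not give its own proof of this proposition; it simply cites Jacobson \cite[V.2, V.5 and VII.6]{jacobson}. So there is no in-paper argument to compare against, and your outline is essentially a sketch of the classical Albert--Penico--Taft route that Jacobson records. That said, two points in your sketch are not right as written.

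First, the cohomological index is wrong. When $N^{\langle 2\rangle}=0$, the algebra $\J$ is an extension $0\to N\to\J\to\J_{ss}\to 0$ with $N$ a $\J_{ss}$-bimodule, and such extensions are classified by the \emph{second} Jordan cohomology group $H^{2}(\J_{ss},N)$; the extension splits precisely when its class vanishes. The group $H^{1}$ governs the conjugacy of two given splittings, not the existence of one. So the vanishing result you need is $H^{2}(\J_{ss},M)=0$ for separable $\J_{ss}$ (this is the Albert--Penico theorem), not $H^{1}=0$.

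Second, your argument that the sum of two nilpotent ideals is nilpotent by ``expanding $(I+K)^{\langle n\rangle}$ and using that each summand \ldots eventually lands inside $I^{\langle r\rangle}$ or $K^{\langle s\rangle}$'' does not go through in a nonassociative algebra: without associativity you cannot regroup a long product so that many copies of $I$ (or of $K$) appear consecutively, and the Peirce/ideal absorption you are implicitly using fails. The standard route, and the one in Jacobson, is indirect: nilpotent $\Rightarrow$ solvable is immediate, the sum of solvable ideals is solvable via $(I+K)/K\cong I/(I\cap K)$, and then Albert's theorem that in a finite-dimensional Jordan algebra every solvable ideal is nilpotent closes the loop. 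You should replace your direct expansion by this solvability detour.
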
 We recall the Albert Theorem that classifies all
finite-dimensional simple Jordan algebras. \begin{theorem} \label{thm:albert}\cite[Corollary V.6.2]{jacobson}Let
$\J$ be a finite-dimensional simple Jordan algebra over an algebraically
closed field $\ka$. Then we have the following possibilities for
$\J$: 
\begin{enumerate}
\item $\J=\ka$, 
\item $\J=\J(V,f)$ the Jordan algebra of a non-degenerate symmetric bilinear
form $f$ in a finite-dimensional vector space $V$ such that $\dim V>1$, 
\item $\J=H(M_{n}(\mathcal{D}),J),\mbox{ }n\geq3$, where $(\mathcal{D},j)$
is a composition algebra with involution $j$, of dimension $1,\mbox{ }2$
or $4$ if $n\geq4$ and of dimension $1,\mbox{ }2,\mbox{ }4$, and
$8$ if $n=3$. And $J$ is the standard involution in $M_{n}(\mathcal{D})$
associated with the involution $j$, i.e. $J(X)$ is the matrix obtained
from the matrix $X\in M_{n}(\mathcal{D})$ by applying the involution
$j$ to each entry in $X$ and then transposing.
\end{enumerate}
\noindent \end{theorem}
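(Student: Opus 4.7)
The plan is to follow the classical Peirce-decomposition approach of Albert and Jacobson. First, since $\J$ is simple and finite dimensional, Proposition \ref{prop:quocienteSS} (with $N=0$) gives that $\J$ has an identity $1$. I would fix a maximal decomposition $1=e_{1}+\cdots+e_{n}$ into pairwise orthogonal primitive idempotents; the integer $n$, the \emph{capacity} of $\J$, is an invariant of the algebra. The Jordan identity forces the Peirce decomposition $\J=\bigoplus_{i\le j}\J_{ij}$, where $\J_{ii}=\{x\in\J:e_{i}\cdot x=x\}$ and, for $i<j$, $\J_{ij}$ is the simultaneous $\tfrac{1}{2}$-eigenspace of the left multiplications by $e_{i}$ and $e_{j}$. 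The multiplication rules among the Peirce pieces (which follow from the linearized identity \eqref{eq:linearjord}) provide the combinatorial backbone for the rest of the argument.

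Second, I would split into cases according to the capacity $n$. If $n=1$, then $\J=\J_{11}$ is a Jordan division algebra over the algebraically closed field $\ka$; an eigenvalue argument for left multiplications forces $\J=\ka$. If $n=2$, the diagonal summands satisfy $\J_{11}=\ka e_{1}$ and $\J_{22}=\ka e_{2}$, while $\J_{12}$ carries a symmetric bilinear pairing into $\ka e_{1}+\ka e_{2}$ coming from $x\cdot y$. Setting $V=\ka e_{1}\oplus\ka e_{2}\oplus\J_{12}$ and reading off this pairing, one recognises $\J$ as $\J(V,f)$ for a non-degenerate symmetric bilinear form $f$. If $n\ge3$, this is the heart of the proof: one invokes the Jacobson coordinatization theorem. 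The off-diagonal pieces $\J_{ij}$ are shown to be mutually isomorphic through the multiplication maps $\J_{ij}\cdot\J_{jk}\subseteq\J_{ik}$, and a coherent choice of such isomorphisms produces a unital composition algebra $\mathcal{D}$ and an identification $\J\cong H(M_{n}(\mathcal{D}),J)$ with the standard involution. Over $\ka=\overline{\ka}$ the possible composition algebras are of dimensions $1,2,4,8$; checking the Jordan identity in $H(M_{n}(\mathcal{D}),J)$ shows that $\dim \mathcal{D}=8$ (the octonions) is forbidden whenever $n\ge4$, yielding exactly the list in the statement.

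The main obstacle is the coordinatization step for $n\ge3$: one must use the Jordan identity and its linearization \eqref{eq:linearjord} to transfer a multiplicative structure from a single Peirce space, say $\J_{12}$, to the whole $\bigoplus_{i\ne j}\J_{ij}$ in a way that is consistent across all indices and realises $\mathcal{D}$ as a true composition algebra; the associativity vs.\ alternativity dichotomy that distinguishes $n=3$ from $n\ge4$ surfaces precisely here and is what isolates the Albert algebra $H(M_{3}(\mathbb{O}),J)$ as the unique exceptional simple finite-dimensional Jordan algebra over $\ka$. Since the statement is quoted verbatim from \cite{jacobson}, in a self-contained write-up I would cite this coordinatization lemma rather than reproduce it, and devote the displayed computation only to the low-capacity cases $n\le2$, which suffice for the dimension-four classification that is the actual object of the paper.
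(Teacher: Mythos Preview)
The paper does not prove this theorem at all; it is stated with the citation \cite[Corollary V.6.2]{jacobson} and used as a black box for the subsequent four-dimensional classification. Your outline is a faithful sketch of the classical Albert--Jacobson argument (capacity, Peirce decomposition, coordinatization for capacity $\ge 3$), and your closing suggestion---to cite the coordinatization theorem rather than reproduce it---is precisely what the paper does for the entire statement.
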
 \begin{theorem} \label{thm:Peirce1}\cite[Chap. III.1]{jacobson}Let
$\J$ be a Jordan algebra with idempotent $e$. Then $\J$ decomposes
into a direct sum or Peirce components 
\[
\mathcal{J}=\mathcal{J}_{1}\oplus\mathcal{J}_{\frac{1}{2}}\oplus\mathcal{J}_{0},
\]
where $\mathcal{J}_{i}=\{x\in\mathcal{J}\mid x\cdot e=ix\}$, for
$i=0,\mbox{ }\frac{1}{2},\mbox{ }1$. This decomposition is called
\textbf{ Peirce decomposition} \textbf{of $\J$ relative to the idempotent}
$e$. The multiplication table for the Peirce decomposition is: 
\begin{equation}
\begin{array}{c}
\mathcal{J}_{1}^{2}\subseteq\mathcal{J}_{1},\qquad\mathcal{J}_{1}\cdot\mathcal{J}_{0}=0,\qquad\mathcal{J}_{0}^{2}\subseteq\mathcal{J}_{0}\text{,}\\
\mathcal{J}_{0}\cdot\mathcal{J}_{\frac{1}{2}}\subseteq\mathcal{J}_{\frac{1}{2}},\qquad\mathcal{J}_{1}\cdot\mathcal{J}_{\frac{1}{2}}\subseteq\mathcal{J}_{\frac{1}{2}},\qquad\mathcal{J}_{\frac{1}{2}}^{2}\subseteq\mathcal{J}_{0}\oplus\mathcal{J}_{1}\text{.}
\end{array}\label{eq:pierce_one}
\end{equation}
 \end{theorem}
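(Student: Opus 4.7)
The plan is to reduce both assertions to properties of the left-multiplication operator $L_e \colon x \mapsto e \cdot x$ on $\J$. For the direct sum decomposition I would establish that $L_e$ is diagonalizable with spectrum in $\{0, \tfrac{1}{2}, 1\}$, and for the multiplication rules I would substitute eigenvectors into the linearized Jordan identity \eqref{eq:linearjord} to determine the $L_e$-eigenvalue of each product.

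For diagonalizability, I would polarize the Jordan identity $((x \cdot x) \cdot y) \cdot x = (x \cdot x) \cdot (y \cdot x)$ by the substitution $x \mapsto x + \lambda e$. Since $e \cdot e = e$, comparing the coefficients of $\lambda^2$ yields
$$2((x \cdot e) \cdot y) \cdot e + (e \cdot y) \cdot x = 2(x \cdot e) \cdot (y \cdot e) + e \cdot (y \cdot x)$$
for all $x, y \in \J$. Specializing $y = e$ collapses this relation into the operator identity $2L_e^3 - 3L_e^2 + L_e = 0$, equivalently $L_e(L_e - \mathrm{Id})(2L_e - \mathrm{Id}) = 0$. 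Because $\operatorname{char}\ka \neq 2$, its three roots $0, \tfrac{1}{2}, 1$ are distinct, so $L_e$ is semisimple, and its eigenspaces $\J_0$, $\J_{1/2}$, $\J_1$ decompose $\J$ as a direct sum.

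For the multiplication table \eqref{eq:pierce_one}, each of the six containments follows from a single specialization of \eqref{eq:linearjord}. For instance, to prove $\J_1 \cdot \J_0 = 0$, given $a \in \J_1$ and $c \in \J_0$, the substitution $(x,y,z,w) = (a,c,e,e)$ forces $e \cdot (a \cdot c) = 0$, whereas $(x,y,z,w) = (c,a,e,e)$ forces $e \cdot (a \cdot c) = a \cdot c$; together they give $a \cdot c = 0$. The remaining five inclusions are verified by analogous substitutions that compute the $L_e$-action on the relevant product and compare it to the sum of the eigenvalues of the factors. The main obstacle is the polarization producing the cubic annihilating polynomial for $L_e$: this identity does not drop out of the Jordan axiom without the precise degree-two polarization above, and guessing this step is the only genuine insight in the argument. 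Once $L_e$ has been diagonalized, the rest is systematic bookkeeping with \eqref{eq:linearjord}.
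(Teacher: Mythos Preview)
The paper does not prove this theorem; it merely cites \cite[Chap.~III.1]{jacobson} and states the result for later use. Your proposal is correct and is essentially the classical argument found in that reference: the cubic annihilating polynomial $L_e(L_e-\mathrm{Id})(2L_e-\mathrm{Id})=0$ obtained by polarizing the Jordan identity in $x$ and specializing $y=e$ is exactly Jacobson's route to the eigenspace decomposition, and the multiplication rules are then read off from \eqref{eq:linearjord} as you describe. There is nothing to compare---your sketch \emph{is} the standard proof the paper defers to.
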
 This theorem has the following generalization: if
$\mathcal{J}$ is a Jordan algebra with an identity element $1=\sum_{i=1}^{n}e_{i}$
which decomposes into a sum of pairwise orthogonal idempotents $e_{i}$,
we have the \textbf{Peirce decomposition} \textbf{of $\J$ relative
to idempotents }$\{e_{1},\ldots,e_{n}\}$: 
\begin{equation}
\mathcal{J}=\bigoplus_{1\leq i\leq j\leq n}\mathcal{J}_{ij}\label{eq:decopierce}
\end{equation}
where $\mathcal{J}_{ii}=\left\{ x\in\mathcal{J}\mid x\cdot e_{i}=x\right\} $
and $\mathcal{J}_{ij}=\left\{ x\in\mathcal{J}\mid x\cdot e_{i}=x\cdot e_{j}=\frac{1}{2}x\right\} $.
The multiplication table for the Peirce decomposition is:

\noindent 
\begin{equation}
\begin{array}{c}
\mathcal{J}_{ii}^{2}\subseteq\mathcal{J}_{ii},\qquad\mathcal{J}_{ij}\cdot\mathcal{J}_{ii}\subseteq\mathcal{J}_{ij},\qquad\mathcal{J}_{ij}^{2}\subseteq\mathcal{J}_{ii}\oplus\mathcal{J}_{jj}\text{,}\\
\mathcal{J}_{ij}\cdot\mathcal{J}_{jk}\subseteq\mathcal{J}_{ik},\qquad\mathcal{J}_{ii}\cdot\mathcal{J}_{jj}=\mathcal{J}_{ii}\cdot\mathcal{J}_{jk}=\mathcal{J}_{ij}\cdot\mathcal{J}_{kl}=0,
\end{array}\label{eq:pierce_many}
\end{equation}
where the indexes $i$, $j$, $k$, $l$ are all different.

\section{Indecomposable Jordan algebras of small dimensions\label{sec:Indecomposable-Jordan-algebras}}

In this section we present the lists of all one- two- and three dimensional
indecomposable Jordan algebras. Henceforth, for convenience we drop
$\cdot$ and denote a multiplication in $\J$ simply as $xy$. 

By Proposition \ref{prop:quocienteSS} every Jordan algebra $\J$,
can be decomposed as $\J=\J_{ss}\oplus N$, where $N$ is the radical
of $\J$ and $\J_{ss}$ is semisimple. We will denote by $e_{i}$
the elements in $\J_{ss}$ and by $n_{i}$ the ones which belong to
$N$.

\subsection{Jordan algebras of dimension one\label{sec:apenddim1}}

There are two non-isomorphic one-dimensional Jordan algebras: the
simple algebra $\mathcal{F}_{1}=\ka e$, with $e^{2}=e$ and the nilpotent
algebra $\mathcal{F}_{2}=\ka n$, with $n^{2}=0$.

\subsection{Jordan algebras of dimension $2$\label{sec:apenddim2}}

The only semisimple Jordan algebra of dimension $2$ is $\ka e_{1}\oplus\ka e_{2}$.
If $\dim N=1$ then $\J$ has an idempotent $e_{1}$, $N$ is generated
by $n_{1}$ and the action of $e_{1}n_{1}=in_{1}$, $i=0,\frac{1}{2},1$
defines three non-isomorphic algebras. Finally there are two nilpotent
algebras: algebra with zero multiplication and algebra generated by
$n_{1}$, with $n_{1}^{3}=0$. Therefore we obtain the following three
indecomposable two-dimensional Jordan algebras:

\begin{table}[H]
\begin{tabular}{|c|>{\centering}m{6cm}|>{\centering}m{5cm}|}
\hline 
$\J$ & Multiplication Table & Observation\tabularnewline
\hline 
\hline 
$\mathcal{B}_{1}$ & \vspace{0.2cm}
$e_{1}^{2}=e_{1}\quad e_{1}n_{1}=n_{1}\quad n_{1}^{2}=0$\vspace{0.2cm}
 & associative\tabularnewline
\hline 
$\mathcal{B}_{2}$ & \vspace{0.2cm}
$e_{1}^{2}=e_{1}\quad e_{1}n_{1}=\frac{1}{2}n_{1}\quad n_{1}^{2}=0$\vspace{0.2cm}
 & nonassociative\tabularnewline
\hline 
$\mathcal{B}_{3}$ & \vspace{0.2cm}
${n_{1}}^{2}=n_{2}\quad n_{1}n_{2}=0\quad{n_{2}}^{2}=0$\vspace{0.2cm}
 & nilpotent, associative\tabularnewline
\hline 
\end{tabular}

\caption{Indecomposable two-dimensional Jordan algebras.}

\end{table}

\subsection{Jordan algebras of dimension $3$\label{sec:apenddim3}}

In \cite{irynashesta} all three dimensional Jordan algebras are described.
We obtain the following $10$ indecomposable algebras:

\begin{table}[H]
\begin{tabular}{|c|>{\centering}m{7cm}|>{\centering}m{3cm}|}
\hline 
$\J$ & Multiplication Table & Observation\tabularnewline
\hline 
\hline 
$\mathcal{T}_{1}$ & \vspace{0.1cm}
$e_{1}^{2}=e_{1}\quad n_{1}^{2}=n_{2}\quad n_{2}^{2}=0$

$e_{1}n_{1}=n_{1}\quad e_{1}n_{2}=n_{2}\quad n_{1}n_{2}=0$ \vspace{0.1cm}
 & unitary, 

associative\tabularnewline
\hline 
$\mathcal{T}_{2}$ & \vspace{0.1cm}

$e_{1}^{2}=e_{1}\quad n_{1}^{2}=0\quad n_{2}^{2}=0$

$e_{1}n_{1}=n_{1}\quad e_{1}n_{2}=n_{2}\quad n_{1}n_{2}=0$ \vspace{0.1cm}
 & unitary, associative\tabularnewline
\hline 
$\mathcal{T}_{3}$ & \vspace{0.1cm}
$n_{1}^{2}=n_{2}\quad n_{2}^{2}=0\quad n_{3}^{2}=0$

$n_{1}n_{2}=n_{3}\quad n_{1}n_{3}=0\quad n_{2}n_{3}=0$ \vspace{0.1cm}
 & associative, nilpotent\tabularnewline
\hline 
$\mathcal{T}_{4}$ & \vspace{0.1cm}
$n_{1}^{2}=n_{2}\quad n_{2}^{2}=0\quad n_{3}^{2}=0$

$n_{1}n_{2}=0\quad n_{1}n_{3}=n_{2}\quad n_{2}n_{3}=0$ \vspace{0.1cm}
 & associative, nilpotent\tabularnewline
\hline 
$\mathcal{T}_{5}$ & \vspace{0.1cm}
$e_{1}^{2}=e_{1}\quad e_{2}^{2}=e_{2}\quad e_{3}^{2}=e_{1}+e_{2}$

$e_{1}e_{2}=0\quad e_{1}e_{3}=\frac{1}{2}e_{3}\quad e_{2}e_{3}=\frac{1}{2}e_{3}$\vspace{0.1cm}
 & unitary, nonassociative, semisimple\tabularnewline
\hline 
$\mathcal{T}_{6}$ & \vspace{0.1cm}
$e_{1}^{2}=e_{1}\quad n_{1}^{2}=0\quad n_{2}^{2}=0$

$e_{1}n_{1}=\frac{1}{2}n_{1}\quad e_{1}n_{2}=n_{2}\quad n_{1}n_{2}=0$
\vspace{0.1cm}
 & nonassociative\tabularnewline
\hline 
$\mathcal{T}_{7}$ & \vspace{0.1cm}
$e_{1}^{2}=e_{1}\quad n_{1}^{2}=0\quad n_{2}^{2}=0$

$e_{1}n_{1}=\frac{1}{2}n_{1}\quad e_{1}n_{2}=\frac{1}{2}n_{2}\quad n_{1}n_{2}=0$\vspace{0.1cm}
 & nonassociative\tabularnewline
\hline 
$\mathcal{T}_{8}$ & \vspace{0.1cm}
$e_{1}^{2}=e_{1}\quad n_{1}^{2}=n_{2}\quad n_{2}^{2}=0$

$e_{1}n_{1}=\frac{1}{2}n_{1}\quad e_{1}n_{2}=0\quad n_{1}n_{2}=0$
\vspace{0.1cm}
 & nonassociative\tabularnewline
\hline 
$\mathcal{T}_{9}$ & \vspace{0.1cm}
$e_{1}^{2}=e_{1}\quad n_{1}^{2}=n_{2}\quad n_{2}^{2}=0$

$e_{1}n_{1}=\frac{1}{2}n_{1}\quad e_{1}n_{2}=n_{2}\quad n_{1}n_{2}=0$
\vspace{0.1cm}
 & nonassociative\tabularnewline
\hline 
$\mathcal{T}_{10}$ & \vspace{0.1cm}
$e_{1}^{2}=e_{1}\quad e_{2}^{2}=e_{2}\quad n_{1}^{2}=0$

$e_{1}e_{2}=0\quad e_{1}n_{1}=\frac{1}{2}n_{1}\quad e_{2}n_{1}=\frac{1}{2}n_{1}$
\vspace{0.1cm}
 & unitary, nonassociative\tabularnewline
\hline 
\end{tabular}

\caption{Indecomposable three-dimensional Jordan algebras.}

\end{table}

\section{Jordan algebras of dimension $4$\label{sec:4-Jordan-algebras}}

In this section we will describe all Jordan algebras of dimension
four. The description is organized according to the dimension of the
radical and subsequently the possible values of the nilpotency type.
Also for each algebra we calculate the dimension of its automorphism
group $\operatorname*{Aut}(\J)$, the annihilator $\operatorname*{Ann}(\J)=\left\{ a\in\J\mid a\J=0\right\} $
and the second power $\J^{2}$. 

We denote by $\J^{\#}=\J\oplus\ka1$ the Jordan algebra obtained by
formal adjoining of the identity element $1$ of $\ka$.

\subsection{Semisimple Jordan algebras}

By Theorem \ref{thm:albert}, there exists only $3$ simple Jordan
algebras of dimension $\leq4$: $\ka e$ of dimension $1$; $\operatorname*{Sym}\nolimits$$_{2}(\ka)^{(+)}$
of dimension $3$ (here denoted by $\mathcal{T}_{5}$) and $M_{2}(\ka)^{(+)}$
of dimension $4$. Then there are three semisimple Jordan algebras
of dimension $4$, namely:

\begin{table}[H]
\begin{tabular}{|c|>{\centering}m{5cm}|>{\centering}m{1.3cm}|>{\centering}m{1.3cm}|>{\centering}m{1cm}|>{\centering}m{3cm}|}
\hline 
$\J$ & Multiplication Table & $\dim$ $\operatorname*{Aut}(\J)$ & $\dim$ $\operatorname*{Ann}(\J)$ & $\dim$ $\J^{2}$ & Observation\tabularnewline
\hline 
$\mathcal{J}_{1}$ & $\mathcal{T}_{5}\oplus\ka e_{4}$ & $1$ & $0$ & $4$ & unitary,

semisimple\tabularnewline
\hline 
$\mathcal{J}_{2}$ & \vspace{0.2cm}
$e_{1}^{2}=e_{1}\quad e_{2}^{2}=e_{2}$ 

$e_{1}e_{3}=\frac{1}{2}e_{3}\quad e_{1}e_{4}=\frac{1}{2}e_{4}$

$e_{2}e_{3}=\frac{1}{2}e_{3}\quad e_{2}e_{4}=\frac{1}{2}e_{4}$

$e_{3}e_{4}=\frac{1}{2}(e_{1}+e_{2})$\vspace{0.2cm}
 & $3$ & $0$ & $4$ & unitary,

semisimple,

it is $M_{2}(\ka)^{(+)}$\tabularnewline
\hline 
$\mathcal{J}_{3}$ & $\ka e_{1}\oplus\ka e_{2}\oplus\ka e_{3}\oplus\ka e_{4}$ & $0$ & $0$ & $4$ & unitary,

semisimple,

associative\tabularnewline
\hline 
\end{tabular}

\caption{Four-dimensional semisimple Jordan algebras}
\end{table}

\subsection{Jordan algebras with one-dimensional radical}

Since $\dim N=1$, $\mathcal{J}_{ss}$ is $3$-dimensional and by
Theorem \ref{thm:albert} we have the following possibilities:

\vspace{0.1cm}

\noindent \textbf{1)$\mathcal{J}_{ss}=\ka e_{1}\oplus\ka e_{2}\oplus\ka e_{3}$}.
Then $\mathcal{J}^{\#}=\mathcal{J}\oplus\ka1$ contains $4$ orthogonal
idempotents $e_{1}$, $e_{2}$, $e_{3}$ and $e_{0}=1-e_{1}-e_{2}-e_{3}$,
so using Peirce decomposition \eqref{eq:decopierce} we have: 
\[
\J=\J_{00}\oplus\J_{01}\oplus\J_{02}\oplus\J_{03}\oplus\J_{11}\oplus\J_{12}\oplus\J_{13}\oplus\J_{22}\oplus\J_{23}\oplus\J_{33},
\]
and the correspondent decomposition of $N$: 
\[
N=N_{00}\oplus N_{01}\oplus N_{02}\oplus N_{03}\oplus N_{11}\oplus N_{12}\oplus N_{13}\oplus N_{22}\oplus N_{23}\oplus N_{33},
\]
where $N_{ij}=N\cap\J_{ij}$. Let $n_{1}$ be a basis of $N$, then
$\J$ is completely defined by Pierce subspace $n_{1}\in N_{ij}$.
Thus we obtain the following pairwise nonisomorphic algebras:

\begin{table}[H]
\begin{tabular}{|c|>{\centering}m{5cm}|>{\centering}m{1.3cm}|>{\centering}m{1.3cm}|>{\centering}m{1cm}|>{\centering}m{3cm}|}
\hline 
$\J$ & Multiplication Table & $\dim$ $\operatorname*{Aut}(\J)$ & $\dim$ $\operatorname*{Ann}(\J)$ & $\dim$ $\J^{2}$ & Observation\tabularnewline
\hline 
$\mathcal{J}_{4}$  & $\mathcal{B}_{1}\oplus\ka e_{2}\oplus\ka e_{3}$  & $1$ & $0$ & $4$ & unitary, associative,

$n_{1}\in N_{11}$\tabularnewline
\hline 
$\mathcal{J}_{5}$  & $\ka e_{1}\oplus\ka e_{2}\oplus\ka e_{3}\oplus\ka n_{1}$  & $1$ & $1$ & $3$ & associative, $n_{1}\in N_{00}$\tabularnewline
\hline 
$\mathcal{J}_{6}$  & \vspace{0.2cm}
$\mathcal{B}_{2}\oplus\ka e_{2}\oplus\ka e_{3}$ \vspace{0.2cm}
 & $2$ & $0$ & $4$ & $n_{1}\in N_{01}$\tabularnewline
\hline 
$\mathcal{J}_{7}$ & $\mathcal{T}_{10}\oplus\ka e_{3}$  & $2$ & $0$ & $4$ & unitary, $n_{1}\in N_{12}$\tabularnewline
\hline 
\end{tabular}

\caption{Four-dimensional Jordan algebras with one-dimensional radical and
semisimple part $\J_{ss}=\ka e_{1}\oplus\ka e_{2}\oplus\ka e_{3}$.}

\end{table}

\noindent \textbf{2)$\mathcal{J}_{ss}=\mathcal{T}_{5}$}. Then the
algebra $\mathcal{J}^{\#}=\mathcal{J}\oplus\ka1$ contains $3$ orthogonal
idempotent elements $e_{1}$, $e_{2}$, $e_{0}=1-e_{1}-e_{2}$ and
we have the following decomposition of $N$ 
\[
N=N_{00}\oplus N_{01}\oplus N_{02}\oplus N_{11}\oplus N_{12}\oplus N_{22}.
\]
Let $n_{1}$ be a basis of $N$ then the action of idempotents is
defined by $i$ and $j$ where $n_{1}\in N_{ij}$. One has to check
how $e_{3}\in\J_{12}$ acts in $n_{1}$. If $n_{1}\in N_{00}$, $e_{3}n_{1}\in\mathcal{J}_{12}\mathcal{J}_{00}=0$
is zero. If $n_{1}\in N_{12}$, then $e_{3}n_{1}\in\mathcal{J}_{12}^{2}\subseteq\mathcal{J}_{11}\oplus\mathcal{J}_{22}$,
and since $e_{3}n_{1}\in N=N_{12}\subseteq\mathcal{J}_{12}$ we have
$e_{3}n_{1}=0$. If $n_{1}\in N_{0i}$ or $n_{1}\in N_{ii}$, $i=1,2$
we obtain that $e_{3}n_{1}=0$ but there exists elements of $\J$
that do not satisfy the identity \eqref{eq:linearjord} 
\[
\begin{array}{c}
(n_{1}e_{3},e_{3},e_{2})+(n_{1}e_{2},e_{3},e_{3})+(e_{3}e_{2},e_{3},n_{1})\neq0\qquad\mbox{if }n_{1}\in N_{01}\mbox{ or }n_{1}\in N_{11},\\
(n_{1}e_{3},e_{3},e_{1})+(n_{1}e_{1},e_{3},e_{3})+(e_{3}e_{1},e_{3},n_{1})\neq0\qquad\mbox{if }n_{1}\in N_{02}\mbox{ or }n_{1}\in N_{22}.
\end{array}
\]
Thus we obtain the following pairwise nonisomorphic algebras: 

\begin{table}[H]
\begin{tabular}{|c|>{\centering}m{5cm}|>{\centering}m{1.3cm}|>{\centering}m{1.3cm}|>{\centering}m{1cm}|>{\centering}m{3cm}|}
\hline 
$\J$ & Multiplication Table & $\dim$ $\operatorname*{Aut}(\J)$ & $\dim$ $\operatorname*{Ann}(\J)$ & $\dim$ $\J^{2}$ & Observation\tabularnewline
\hline 
$\mathcal{J}_{8}$ & \vspace{0.4cm}

$\mathcal{T}_{5}\oplus\ka n_{1}$

\vspace{0.4cm}
 & $2$ & $1$ & $3$ & $n_{1}\in N_{00}$\tabularnewline
\hline 
$\mathcal{J}_{9}$ & \vspace{0.4cm}

$e_{1}^{2}=e_{1}\quad e_{2}^{2}=e_{2}$

$e_{3}^{2}=e_{1}+e_{2}$

$e_{1}e_{3}=\frac{1}{2}e_{3}\quad e_{1}n_{1}=\frac{1}{2}n_{1}$

$e_{2}e_{3}=\frac{1}{2}e_{3}\quad e_{2}n_{1}=\frac{1}{2}n_{1}$

\vspace{0.4cm}
  & $4$ & $0$ & $4$ & unitary, $\ensuremath{n_{1}\in N_{12}}$\tabularnewline
\hline 
\end{tabular}

\caption{Four-dimensional Jordan algebras with one-dimensional radical and
semisimple part $\J_{ss}=\mathcal{T}_{5}$.}

\end{table}

\subsection{Jordan algebras with two-dimensional radical}

The only semi-simple two-dimensional Jordan algebra is $\J_{ss}=\ka e_{1}\oplus\ka e_{2}$,
therefore the unitary algebra $\mathcal{J}^{\#}=\mathcal{J}\oplus\ka1$
contains $3$ orthogonal idempotent elements $e_{1}$, $e_{2}$, $e_{0}=1-e_{1}-e_{2}$
thus we have the following Peirce decomposition of $N$: 
\[
N=N_{00}\oplus N_{01}\oplus N_{02}\oplus N_{11}\oplus N_{12}\oplus N_{22}\text{.}
\]
The ideal $N$ can have two nilpotency types: $(2)$ or $(1,1).$

\vspace{0.1cm}

\noindent \textbf{1) Nilpotency type $(2)$.} Then $N^{2}=0$. Let
$n_{1}\mbox{, }n_{2}$ be a basis of $N$, then it is enough to choose
to which Pierce components belong $n_{1}$ and $n_{2}$. Thus we obtain
the following pairwise nonisomorphic algebras: 

\begin{table}[H]
\begin{tabular}{|c|>{\centering}m{5cm}|>{\centering}m{1.3cm}|>{\centering}m{1.3cm}|>{\centering}m{0.8cm}|>{\centering}m{4cm}|}
\hline 
$\J$ & Multiplication Table & $\dim$ $\operatorname*{Aut}(\J)$ & $\dim$ $\operatorname*{Ann}(\J)$ & $\dim$ $\J^{2}$ & Observation\tabularnewline
\hline 
$\mathcal{J}_{10}$ & \vspace{0.2cm}
$\mathcal{B}_{2}\oplus\ka e_{2}\oplus\ka n_{2}$ \vspace{0.2cm}
 & $3$ & $1$ & $3$ & $n_{1}\in N_{01},\mbox{ }n_{2}\in N_{00}$\tabularnewline
\hline 
$\mathcal{J}_{11}$ & \vspace{0.2cm}
$\mathcal{T}_{10}\oplus\ka n_{2}$\vspace{0.2cm}
 & $3$ & $1$ & $3$ & $n_{1}\in N_{12}\text{, }n_{2}\in N_{00}$\tabularnewline
\hline 
$\mathcal{J}_{12}$ & \vspace{0.2cm}
$\mathcal{T}_{7}\oplus\ka e_{2}$\vspace{0.2cm}
 & $6$ & $0$ & $4$ & $n_{1}\text{, }n_{2}\in N_{01}$\tabularnewline
\hline 
$\mathcal{J}_{13}$ & \vspace{0.2cm}
$\mathcal{B}_{2}\oplus\mathcal{B}_{2}$\vspace{0.2cm}
 & $4$ & $0$ & $4$ & $n_{1}\in N_{02}\text{, }n_{2}\in N_{01}$\tabularnewline
\hline 
$\mathcal{J}_{14}$ & \vspace{0.2cm}
$\mathcal{T}_{6}\oplus\ka e_{2}$\vspace{0.2cm}
 & $3$ & $0$ & $4$ & $n_{1}\in N_{01}\text{, }n_{2}\in N_{11}$\tabularnewline
\hline 
$\mathcal{J}_{15}$ & \vspace{0.2cm}
$\mathcal{B}_{2}\oplus\mathcal{B}_{1}$\vspace{0.2cm}
 & $3$ & $0$ & $4$ & $n_{1}\in N_{11}\text{, }n_{2}\in N_{02}$\tabularnewline
\hline 
$\mathcal{J}_{16}$ & \vspace{0.2cm}
$e_{1}^{2}=e_{1}\quad e_{2}^{2}=e_{2}$

$e_{1}n_{1}=\frac{1}{2}n_{1}$

$e_{1}n_{2}=\frac{1}{2}n_{2}\quad e_{2}n_{1}=\frac{1}{2}n_{1}$ \vspace{0.2cm}
 & $4$ & $0$ & $4$ & $n_{1}\in N_{12}\text{, }n_{2}\in N_{01}$\tabularnewline
\hline 
$\mathcal{J}_{17}$ & \vspace{0.2cm}
$e_{1}^{2}=e_{1}\quad e_{2}^{2}=e_{2}$

$e_{1}n_{1}=\frac{1}{2}n_{1}$

$e_{1}n_{2}=n_{2}\quad e_{2}n_{1}=\frac{1}{2}n_{1}$\vspace{0.2cm}
 & $3$ & $0$ & $4$ & unitary, $n_{1}\in N_{12}\text{, }n_{2}\in N_{11}$\tabularnewline
\hline 
$\mathcal{J}_{18}$ & \vspace{0.2cm}
$e_{1}^{2}=e_{1}\quad e_{2}^{2}=e_{2}$ $e_{1}n_{1}=\frac{1}{2}n_{1}\quad e_{1}n_{2}=\frac{1}{2}n_{2}$
$e_{2}n_{1}=\frac{1}{2}n_{1}\quad e_{2}n_{2}=\frac{1}{2}n_{2}$\vspace{0.2cm}
 & $6$ & $0$ & $4$ & unitary,

$n_{1}\text{, }n_{2}\in N_{12}$\tabularnewline
\hline 
$\mathcal{J}_{19}$ & $\ka e_{1}\oplus\ka n_{1}\oplus\ka e_{2}\oplus\ka n_{2}$ & $4$ & $2$ & $2$ & associative, $n_{1}\text{, }n_{2}\in N_{00}$\tabularnewline
\hline 
$\mathcal{J}_{20}$ & $\mathcal{B}_{1}\oplus\ka e_{2}\oplus\ka n_{2}$ & $2$ & $1$ & $3$ & associative, $n_{1}\in N_{11}\text{, }n_{2}\in N_{00}$\tabularnewline
\hline 
$\mathcal{J}_{21}$ & $\mathcal{T}_{2}\oplus\ka e_{2}$ & $4$ & $0$ & $4$ & unitary, associative, $n_{1}\text{, }n_{2}\in N_{11}$\tabularnewline
\hline 
$\mathcal{J}_{22}$ & $\mathcal{B}_{1}\oplus\mathcal{B}_{1}$ & $2$ & $0$ & $4$ & unitary, associative, $n_{1}\in N_{11}\text{, }n_{2}\in N_{22}$\tabularnewline
\hline 
\end{tabular}

\caption{Four-dimensional Jordan algebras with two-dimensional radical of type
$(2)$. }
\end{table}

\noindent \textbf{2)} \textbf{Nilpotency type $(1,1)$.} There exists
$n\in N$ such that $N=\ka n\oplus\ka n^{2}$ with $n^{3}=0$. Suppose
that $N=N_{ij}$ then $n^{2}\in N_{ij}^{2}\subseteq N_{ii}\oplus N_{jj}$,
that implies $i=j$, therefore $N=N_{ii},\mbox{ }i=0,1,2$. Now, suppose
that $N=N_{ij}\oplus N_{kl}$ with $(i,j)\neq(k,l)$ and $\dim N_{ij}=\dim N_{kl}=1$.
We will analyze this cases, in which we consider $i,\mbox{ }j,\mbox{ }k$
all different. If $N=N_{ii}\oplus N_{ij}$, we can choose $n$ as
an element of $N_{ij}$. In fact if $N_{ii}=\ka a$ and $N_{ij}=\ka b$,
then we have $b^{2}\in N_{ii}\oplus N_{jj}\mbox{ but }N_{jj}=0\mbox{, thus }b^{2}=\alpha a$,
for some $\alpha\in\ka$. Note that $\alpha\neq0$ since by nilpotency
$a^{2}=ab=0$. Consequently $N=\ka b^{2}\oplus\ka b$.

For any other pair of indexes $i,\, j$ and $k,\, l$ we will get
a contradiction to the fact that the nilpotency type of $N$ is $(1,1)$.

\selectlanguage{brazil}%
\renewcommand{\theenumi}{\roman{enumi}}
\selectlanguage{english}%
\begin{enumerate}
\item If $N=N_{ii}\oplus N_{jj}$, then by nilpotency $N_{ii}^{2}=N_{jj}^{2}=0$.
Moreover $N_{ii}N_{jj}=0$ and therefore $N^{2}=0$ leads to contradiction.
\item Analogously if $N=N_{ii}\oplus N_{jk}$, again by nilpotency $N_{ii}^{2}=0$
and by multiplication table \eqref{eq:pierce_many} $N_{jk}^{2}=N_{ii}N_{jk}=0$,
which is a contradiction.
\item Finally if $N=N_{ij}\oplus N_{ik}$ then $N^{2}\subseteq N_{ii}\oplus N_{jj}\oplus N_{jk}\oplus N_{kk}=0.$
\end{enumerate}
Therefore we obtain the following pairwise nonisomorphic algebras,
where $n_{1}^{2}=n_{2}$.

\begin{table}[H]
\begin{tabular}{|c|>{\centering}m{5cm}|>{\centering}m{1.3cm}|>{\centering}m{1.3cm}|>{\centering}m{1cm}|>{\centering}m{3cm}|}
\hline 
$\J$ & Multiplication Table & $\dim$ $\operatorname*{Aut}(\J)$ & $\dim$ $\operatorname*{Ann}(\J)$ & $\dim$ $\J^{2}$ & Observation\tabularnewline
\hline 
$\mathcal{J}_{23}$ & $\mathcal{T}_{8}\oplus\ka e_{2}$ & $2$ & $1$ & $4$ & $n_{1}\in N_{01}\text{,}$

$n_{2}\in N_{00}$\tabularnewline
\hline 
$\mathcal{J}_{24}$ & $\mathcal{T}_{9}\oplus\ka e_{2}$ & $2$ & $0$ & $4$ & $n_{1}\in N_{01}\text{,}$

$n_{2}\in N_{11}$\tabularnewline
\hline 
$\mathcal{J}_{25}$ & \vspace{0.3cm}
$e_{1}^{2}=e_{1}\quad e_{2}^{2}=e_{2}$

$e_{1}n_{2}=n_{2}\quad e_{1}n_{1}=\frac{1}{2}n_{1}$

$e_{2}n_{1}=\frac{1}{2}n_{1}\quad n_{1}^{2}=n_{2}$\vspace{0.3cm}
  & $2$ & $0$ & $4$ & unitary,

$n_{1}\in N_{12}\text{,}$

$n_{2}\in N_{11}$\tabularnewline
\hline 
$\mathcal{J}_{26}$ & $\mathcal{B}_{3}\oplus\ka e_{1}\oplus\ka e_{2}$ & $2$ & $1$ & $3$ & associative,

$n_{1},\mbox{ }n_{2}\in N_{00}$\tabularnewline
\hline 
$\mathcal{J}_{27}$ & $\mathcal{T}_{1}\oplus\ka e_{2}$ & $2$ & $0$ & $4$ & unitary,

associative,

$n_{1},\mbox{ }n_{2}\in N_{11}$\tabularnewline
\hline 
\end{tabular}

\caption{Four-dimensional Jordan algebras with two-dimensional radical of type
$(1,1)$.}

\end{table}

\subsection{Jordan algebras with three-dimensional radical}

In this case, we only have one idempotent $e_{1}\in\J$ and the Pierce
decomposition of $N$ with respect to $e_{1}$ is 
\[
N=N_{0}\oplus N_{\frac{1}{2}}\oplus N_{1}.
\]

\noindent The radical $N$ may have the following nilpotency types:
$(3)$, $(1,1,1)$ or $(2,1)$.

We prove the following lemma that will be frequently used in the rest
of the classification. \begin{lemma}\label{lemma} If $\dim N_{i}=1$
then $N_{i}^{2}=0$, for $i=0,1$. If $\dim N_{\frac{1}{2}}=1$ then
$N_{i}N_{\frac{1}{2}}=0$, for $i=0,1$. \end{lemma}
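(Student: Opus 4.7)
The plan is to prove the two parts separately, handling (1) by a short nilpotency observation and then attacking (2) through a direct expansion of the Jordan identity.

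For the first assertion, the Peirce relations \eqref{eq:pierce_one} show that $\J_i$ is a subalgebra of $\J$ for $i = 0, 1$, and since $N$ is an ideal of $\J$, the intersection $N_i = N \cap \J_i$ is a subalgebra of the nilpotent algebra $N$, hence itself nilpotent. A one-dimensional nilpotent Jordan algebra $\ka n$ must satisfy $n^2 = \alpha n$ with $\alpha = 0$ (otherwise $\alpha^{-1} n$ would be an idempotent lying inside the nilpotent algebra $N$, a contradiction). Therefore $N_i^2 = 0$.

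For the second assertion, fix a generator $m$ of $N_{1/2}$. The Peirce rule $\J_i \cdot \J_{1/2} \subseteq \J_{1/2}$ together with $N$ being an ideal give $N_0 \cdot m,\, N_1 \cdot m \subseteq N \cap \J_{1/2} = N_{1/2} = \ka m$, so there is a linear functional $\lambda \colon N_0 \oplus N_1 \to \ka$ defined by $am = \lambda(a)\, m$; the goal is to prove $\lambda \equiv 0$. Write $m^2 = b + c$ with $b \in N_0$, $c \in N_1$, using $\J_{1/2}^2 \subseteq \J_0 \oplus \J_1$.

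The key step is to apply the Jordan identity $(x^2 y) x = x^2 (y x)$ with $y = e_1$ and $x = m + a$, first for $a \in N_0$. Expanding $x^2 = (b + a^2) + c + 2\lambda(a)\, m$ along the Peirce decomposition and computing both sides, the $N_0$ and $N_1$ components cancel automatically, and matching the $\J_{1/2}$-components yields, after simplification, the scalar identity $\lambda(a^2) = 2\lambda(a)^2$, together with the auxiliary relation $\lambda(b) = \lambda(c)$ (already visible from the case $x = m$ alone). Since $N$ is a nilpotent Jordan algebra, every element is nil by power-associativity, so $a^{2^{k}} = 0$ for some $k$. Iterating the functional equation produces $\lambda(a^{2^{k}}) = 2^{2^{k}-1}\, \lambda(a)^{2^{k}}$, whose left side vanishes, forcing $\lambda(a) = 0$. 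The identical argument for $a \in N_1$ (now $x^2 = b + (c + a^2) + 2\lambda(a)\, m$) yields $\lambda|_{N_1} = 0$, and hence $N_0 N_{1/2} = N_1 N_{1/2} = 0$.

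The main obstacle is the bookkeeping in expanding $(x^2 y) x$ and $x^2 (y x)$ under the Peirce decomposition, particularly keeping track of which products land in $\J_{1/2}$ as opposed to $\J_0 \oplus \J_1$; once the clean scalar equation $\lambda(a^2) = 2\lambda(a)^2$ is isolated, nilpotency closes the argument at once.
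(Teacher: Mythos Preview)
Your argument is correct. For the first assertion you and the paper argue identically: a one-dimensional subalgebra of a nilpotent algebra must have zero square.

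For the second assertion, however, the paper takes a much shorter route. Rather than expanding the Jordan identity, it simply notes that $N_{\frac{1}{2}} N_i \subseteq N_{\frac{1}{2}}$ by \eqref{eq:pierce_one}; if equality held, then iterating would give
\[
N_{\frac{1}{2}} = ((N_{\frac{1}{2}} N_i) N_i) N_i \subseteq N^{\langle 4 \rangle} = 0
\]
(in the paper's setting $\dim N = 3$, so the nilindex is at most $4$), a contradiction. Hence the inclusion is strict, and since $\dim N_{\frac{1}{2}} = 1$ this forces $N_i N_{\frac{1}{2}} = 0$. Your approach via the functional equation $\lambda(a^{2}) = 2\lambda(a)^{2}$ is valid and does not appeal to any explicit bound on the nilindex of $N$, but the paper's descending-chain argument generalises just as easily (replace $N^{\langle 4 \rangle}$ by $N^{\langle s \rangle}$ for the nilindex $s$) and completely sidesteps the Peirce bookkeeping you had to do when expanding $(x^{2}y)x = x^{2}(yx)$. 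In short: both proofs are sound; the paper's is a two-line observation where yours unpacks the Jordan identity by hand.
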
 \begin{proof}
Since $N_{i}^{2}\subseteq N_{i}$, by nilpotency this inclusion is
strict $N_{i}^{2}\subsetneq N_{i}$. By hypothesis we have $\dim N_{i}=1$
thus $N_{i}^{2}=0$. For the second part note that $N_{\frac{1}{2}}N_{i}\subseteq N_{\frac{1}{2}}$.
Suppose that the equality holds, then 
\[
0=N^{\left\langle 4\right\rangle }\supseteq((N_{\frac{1}{2}}N_{i})N_{i})N_{i}=N_{\frac{1}{2}}\neq0,
\]
thus $N_{i}N_{\frac{1}{2}}\subsetneq N_{\frac{1}{2}}$. Finally, by
hypothesis we have $\dim N_{\frac{1}{2}}=1$, and it follows that
$N_{i}N_{\frac{1}{2}}=0$.\end{proof}

\noindent \textbf{1)} \textbf{Nilpotency type $(3)$.} Then $N^{2}=0$.
Let $n_{1}\mbox{, }n_{2}$ and $n_{3}$ be a basis of $N$ then the
action of $N$ is defined by choosing to which Pierce subspaces belong
$n_{i}$'s. Thus we obtain the following pairwise nonisomorphic algebras:

\begin{table}[H]
\begin{tabular}{|c|>{\centering}m{5cm}|>{\centering}m{1.3cm}|>{\centering}m{1.3cm}|>{\centering}m{1cm}|>{\centering}m{4cm}|}
\hline 
$\J$ & Multiplication Table & $\dim$ $\operatorname*{Aut}(\J)$ & $\dim$ $\operatorname*{Ann}(\J)$ & $\dim$ $\J^{2}$ & Observation\tabularnewline
\hline 
$\mathcal{J}_{28}$ & \vspace{0.3cm}
$\mathcal{B}_{2}\oplus\ka n_{2}\oplus\ka n_{3}$\vspace{0.3cm}
 & $6$ & $2$ & $2$ & $n_{1}\in N_{\frac{1}{2}}\mbox{, }n_{2}\text{, }n_{3}\in N_{0}$\tabularnewline
\hline 
$\mathcal{J}_{29}$ & $\mathcal{T}_{6}\oplus\ka n_{3}$ & $4$ & $1$ & $3$ & $n_{1}\in N_{\frac{1}{2}}\text{, }n_{2}\in N_{1}\text{,}$ $n_{3}\in N_{0}$\tabularnewline
\hline 
$\mathcal{J}_{30}$ & \vspace{0.3cm}
$\mathcal{T}_{7}\oplus\ka n_{3}$ \vspace{0.3cm}
 & $7$ & $1$ & $3$ & $n_{1}\text{, }n_{2}\in N_{\frac{1}{2}}\text{, }n_{3}\in N_{0}$\tabularnewline
\hline 
$\mathcal{J}_{31}$ & \vspace{0.3cm}
$e_{1}^{2}=e_{1}\quad e_{1}n_{1}=n_{1}$

$e_{1}n_{2}=n_{2}\quad e_{1}n_{3}=\frac{1}{2}n_{3}$\vspace{0.3cm}
 & $6$ & $0$ & $4$ & $n_{1}\text{, }n_{2}\in N_{1}\text{, }n_{3}\in N_{\frac{1}{2}}$\tabularnewline
\hline 
$\mathcal{J}_{32}$ & \vspace{0.3cm}
$e_{1}^{2}=e_{1}\quad e_{1}n_{1}=\frac{1}{2}n_{1}$ $e_{1}n_{2}=\frac{1}{2}n_{2}\quad e_{1}n_{3}=n_{3}$\vspace{0.3cm}
 & $7$ & $0$ & $4$ & $n_{1}\mbox{, }n_{2}\in N_{\frac{1}{2}}\text{, }n_{3}\in N_{1}$\tabularnewline
\hline 
$\mathcal{J}_{33}$ & \vspace{0.3cm}
$e_{1}^{2}=e_{1}\quad e_{1}n_{1}=\frac{1}{2}n_{1}$ $e_{1}n_{2}=\frac{1}{2}n_{2}\quad e_{1}n_{3}=\frac{1}{2}n_{3}$\vspace{0.3cm}
 & $12$ & $0$ & $4$ & $n_{1}\text{, }n_{2}\text{, }n_{3}\in N_{\frac{1}{2}}$\tabularnewline
\hline 
$\mathcal{J}_{34}$ & $\ka e_{1}\oplus\ka n_{1}\oplus\ka n_{2}\oplus\ka n_{3}$ & $9$ & $3$ & $1$ & $n_{1}\text{, }n_{2}\text{, }n_{3}\in N_{0}$,

associative\tabularnewline
\hline 
$\mathcal{J}_{35}$ & $\mathcal{B}_{1}\oplus\ka n_{2}\oplus\ka n_{3}$ & $5$ & $2$ & $2$ & $n_{1}\in N_{1}\text{, }n_{2}\mbox{, }n_{3}\in N_{0}$

associative\tabularnewline
\hline 
$\mathcal{J}_{36}$ & \vspace{0.3cm}
$e_{1}^{2}=e_{1}\quad e_{1}n_{1}=n_{1}$

$e_{1}n_{2}=n_{2}\quad e_{1}n_{3}=n_{3}$\vspace{0.3cm}
 & $9$ & $0$ & $4$ & $n_{1}\text{, }n_{2}\text{, }n_{3}\in N_{1}$,

unitary,

associative\tabularnewline
\hline 
$\mathcal{J}_{37}$ & $\mathcal{T}_{2}\oplus\ka n_{3}$ & $5$ & $1$ & $3$ & $n_{1}\mbox{, }n_{2}\in N_{1}\text{, }n_{3}\in N_{0}$

associative\tabularnewline
\hline 
\end{tabular}

\caption{Four-dimensional Jordan algebras with three-dimensional radical of
type $(3)$.}
\end{table}

\noindent \textbf{2) Nilpotency type $(1,1,1)$.} In this case, $N=\mathcal{T}_{3}$
and we have two non-isomorphic Jordan algebras:

\begin{table}[H]
\begin{tabular}{|c|>{\centering}m{5cm}|>{\centering}m{1.3cm}|>{\centering}m{1.3cm}|>{\centering}m{1cm}|>{\centering}m{3cm}|}
\hline 
$\J$  & Multiplication Table & $\dim$ $\operatorname*{Aut}(\J)$ & $\dim$ $\operatorname*{Ann}(\J)$ & $\dim$ $\J^{2}$ & Observation\tabularnewline
\hline 
$\mathcal{J}_{38}$ & \vspace{0.3cm}
$\mathcal{T}_{3}\oplus\ka e_{1}$\vspace{0.3cm}
 & $3$ & $1$ & $3$ & $N=N_{0}$,

associative\tabularnewline
\hline 
$\mathcal{J}_{39}$ & \vspace{0.3cm}
$e_{1}^{2}=e_{1}\quad n_{1}^{2}=n_{2}$

$e_{1}n_{1}=n_{1}\quad e_{1}n_{2}=n_{2}$

$e_{1}n_{3}=n_{3}\quad n_{1}n_{2}=n_{3}$\vspace{0.3cm}
 & $3$ & $0$ & $4$ & $N=N_{1}$,

unitary,

associative\tabularnewline
\hline 
\end{tabular}

\caption{Four-dimensional Jordan algebras with three-dimensional radical of
type $(1,1,1)$.}
\end{table}

All the other cases lead to contradiction, namely:

\selectlanguage{brazil}%
\renewcommand{\theenumi}{\roman{enumi}}
\selectlanguage{english}%
\begin{enumerate}
\item $N=N_{\frac{1}{2}}$, in this case $N^{2}\subseteq N_{0}\oplus N_{1}=0$. 
\item $N=N_{0}\oplus N_{1}$, $\dim N_{0}=2$. By Lemma \ref{lemma} and
multiplication table \eqref{eq:pierce_one}, $N^{2}\subseteq N_{0}$.
Since $\dim N^{2}=2=\dim N_{0}$, $N^{2}=N_{0}$. Consequently, there
exists $n_{0}\in N_{0}$ such that $N^{3}=\ka n_{0}$, choosing some
$m_{0}\in N_{0}$ and $n_{1}\in N_{1}$ we obtain $N^{2}=N_{0}=\ka n_{0}+\ka m_{0}$.
Hence, we have the following products in $N$: 
\begin{gather*}
N^{\left\langle 4\right\rangle }\ni n_{0}^{2}=n_{0}m_{0}=0,\\
n_{1}^{2}=n_{0}n_{1}=m_{0}n_{1}=0,
\end{gather*}
leaving only $m_{0}^{2}$ nonzero, which contradicts to the fact that
$\dim N^{2}=2$. Analogously we obtain a contradiction if $N=N_{0}\oplus N_{1}$,
with $\dim N_{1}=2$.
\item $N=N_{0}\oplus N_{\frac{1}{2}}$, $\dim N_{0}=2$. By Lemma \ref{lemma},
$N^{2}\subseteq N_{0}$. Since $\dim N^{2}=2=\dim N_{0}$, we have
$N^{2}=N_{0}$. Consequently, there exists $n_{0}\in N_{0}$ such
that $N^{3}=\ka n_{0}$, choosing some $m_{0}\in N_{0}$ and $n_{\frac{1}{2}}\in N_{\frac{1}{2}}$
we obtain $N^{2}=N_{0}=\ka n_{0}+\ka m_{0}$. Hence, we have the following
products in $N$: 
\begin{gather*}
N^{\left\langle 4\right\rangle }\ni n_{0}^{2}=n_{0}m_{0}=0,\qquad N^{3}\ni m_{0}^{2}=\alpha n_{0}\\
n_{0}n_{\frac{1}{2}}=m_{0}n_{\frac{1}{2}}=0\mbox{,}\qquad N^{2}\ni n_{\frac{1}{2}}^{2}=\alpha'n_{0}+\beta'm_{0}.
\end{gather*}
We obtain a Jordan algebra only if $\alpha=0$ or $\beta'=0$, but
in both cases it contradicts the fact that $\dim N^{2}=2$. Analogous
proof leads to contradiction if $N=N_{1}\oplus N_{\frac{1}{2}}$,
with $\dim N_{1}=2$.
\item $N=N_{0}\oplus N_{\frac{1}{2}}$, $\dim N_{\frac{1}{2}}=2$. First
we show that 
\begin{equation}
(N_{\frac{1}{2}}N_{0})N_{0}=(N_{0}N_{\frac{1}{2}})N_{\frac{1}{2}}=0\label{eq:1200}
\end{equation}
.

\noindent By \eqref{eq:pierce_one} $(N_{\frac{1}{2}}N_{0})N_{0}\subseteq N_{\frac{1}{2}}N_{0}\subseteq N_{\frac{1}{2}}$.
If $N_{\frac{1}{2}}N_{0}=N_{\frac{1}{2}}$ then 
\[
0=N^{\left\langle 4\right\rangle }\supseteq((N_{\frac{1}{2}}N_{0})N_{0})N_{0}=N_{\frac{1}{2}}\neq0\mbox{,}
\]
therefore $N_{\frac{1}{2}}N_{0}\subsetneq N_{\frac{1}{2}}$. Analogously,
if $(N_{\frac{1}{2}}N_{0})N_{0}=N_{\frac{1}{2}}N_{0}$ then we obtain
\[
0=N^{\left\langle 4\right\rangle }\supseteq((N_{\frac{1}{2}}N_{0})N_{0})N_{0}=N_{\frac{1}{2}}N_{0},
\]
thus $N^{2}=N_{0}^{2}+N_{\frac{1}{2}}^{2}\subseteq N_{0}$ but $\dim N^{2}=2$
while $\dim N_{0}=1.$ Therefore $(N_{\frac{1}{2}}N_{0})N_{0}\subsetneq N_{\frac{1}{2}}N_{0}\subsetneq N_{\frac{1}{2}}$
and, consequently, 
\[
\dim[(N_{\frac{1}{2}}N_{0})N_{0}]<\dim(N_{\frac{1}{2}}N_{0})<\dim N_{\frac{1}{2}}=2,
\]
thus $\dim(N_{\frac{1}{2}}N_{0})=1$ and $(N_{\frac{1}{2}}N_{0})N_{0}=0$.

By \eqref{eq:pierce_one} $(N_{0}N_{\frac{1}{2}})N_{\frac{1}{2}}\subseteq N_{\frac{1}{2}}^{2}\subseteq N_{0}$.
If $(N_{0}N_{\frac{1}{2}})N_{\frac{1}{2}}=N_{0}$ then 
\[
0=N^{\left\langle 4\right\rangle }\supseteq((N_{0}N_{\frac{1}{2}})N_{\frac{1}{2}})N_{\frac{1}{2}}=N_{0}N_{\frac{1}{2}}\neq0\mbox{,}
\]
which contradicts to $\dim(N_{0}N_{\frac{1}{2}})=1$. Consequently,
$(N_{0}N_{\frac{1}{2}})N_{\frac{1}{2}}\subsetneq N_{0}$ and, since
$\dim N_{0}=1$, we have $(N_{0}N_{\frac{1}{2}})N_{\frac{1}{2}}=0$.

Then, since $N=\mathcal{T}_{3}$, for the basis element $n_{1}$ we
have $n_{1}=m_{0}+m_{\frac{1}{2}}$, with $m_{i}\in N_{i},$ $i=0,\frac{1}{2}$
then $n_{1}^{2}=m'_{0}m_{\frac{1}{2}}+m_{\frac{1}{2}}^{2}$ and $n_{1}^{3}=m_{\frac{1}{2}}^{3}$,
using that $N_{\frac{1}{2}}^{2}\subseteq N_{0}$, Lemma \ref{lemma}
and the identities \eqref{eq:1200}.

Hence, 
\[
(m_{\frac{1}{2}}^{2},e_{1},m_{\frac{1}{2}})=(m_{\frac{1}{2}}^{2}e_{1})m_{\frac{1}{2}}-m_{\frac{1}{2}}^{2}(e_{1}m_{\frac{1}{2}})=-\frac{1}{2}n_{1}^{3}\neq0,
\]
and then $\J$ is not a Jordan algebra.

\noindent An analogous proof shows that there is no four-dimensional
Jordan algebra with $N=N_{1}\oplus N_{\frac{1}{2}}$, $\dim N_{\frac{1}{2}}=2$. 

\item $N=N_{0}\oplus N_{\frac{1}{2}}\oplus N_{1}$ where $\dim N_{i}=1$
for $i=0,\,\frac{1}{2},\,1$. By Lemma \ref{lemma} and \eqref{eq:pierce_one}
we have $N^{2}\subseteq N_{0}\oplus N_{1}$ that implies $N^{3}\subseteq(N_{0}\oplus N_{1})N=0$
which is a contradiction. 
\end{enumerate}
\vspace{0.1cm}

\textbf{3)} \textbf{Nilpotency type $(2,1)$}. Then or $N=\mathcal{B}_{3}\oplus\ka n_{3}$
or $N=\mathcal{T}_{4}$. We have the following cases depending on
the dimension of the subspaces $N_{i}$ for $i=0,\,\frac{1}{2},\,1$.
\begin{enumerate}
\item $N=N_{0}$. We obtain two nonisomorphic associative algebras:

\begin{table}[H]
\begin{tabular}{|c|>{\centering}m{5cm}|>{\centering}m{1.3cm}|>{\centering}m{1.3cm}|>{\centering}m{1cm}|>{\centering}m{3cm}|}
\hline 
$\J$ & Multiplication Table & $\dim$ $\operatorname*{Aut}(\J)$ & $\dim$ $\operatorname*{Ann}(\J)$ & $\dim$ $\J^{2}$ & Observation\tabularnewline
\hline 
$\mathcal{J}_{40}$ & $\mathcal{B}_{3}\oplus\ka e_{1}\oplus\ka n_{3}$  & $5$ & $2$ & $2$ & $N=\mathcal{B}_{3}\oplus\ka n_{3}$ associative\tabularnewline
\hline 
$\mathcal{J}_{41}$ & $\mathcal{T}_{4}\oplus\ka e_{1}$ & $4$ & $1$ & $2$ & $N=\mathcal{T}_{4}$ associative\tabularnewline
\hline 
\end{tabular}

\caption{Four-dimensional Jordan algebras with three-dimensional radical of
type $(2,1)$ and $N=N_{0}$.}
\end{table}

\item $N=N_{1}$. We obtain two nonisomorphic associative algebras:

\begin{table}[H]
\begin{tabular}{|c|>{\centering}m{5cm}|>{\centering}m{1.3cm}|>{\centering}m{1.3cm}|>{\centering}m{1cm}|>{\centering}m{3cm}|}
\hline 
$\J$ & Multiplication Table & $\dim$ $\operatorname*{Aut}(\J)$ & $\dim$ $\operatorname*{Ann}(\J)$ & $\dim$ $\J^{2}$ & Observation\tabularnewline
\hline 
$\mathcal{J}_{42}$ & \vspace{0.2cm}
$e_{1}^{2}=e_{1}\quad n_{1}^{2}=n_{2}$

$e_{1}n_{1}=n_{1}\quad e_{1}n_{2}=n_{2}$

$e_{1}n_{3}=n_{3}$\vspace{0.2cm}
 & $5$ & $0$ & $4$ & $N=\mathcal{B}_{3}\oplus\ka n_{3}$,

unitary,

associative\tabularnewline
\hline 
$\mathcal{J}_{43}$ & \vspace{0.2cm}
$e_{1}^{2}=e_{1}\quad n_{1}^{2}=n_{2}$

$e_{1}n_{1}=n_{1}\quad e_{1}n_{2}=n_{2}$ $e_{1}n_{3}=n_{3}\quad n_{1}n_{3}=n_{2}$\vspace{0.2cm}
 & $4$ & $0$ & $4$ & $N=\mathcal{T}_{4}$,

unitary,

associative\tabularnewline
\hline 
\end{tabular}

\caption{Four-dimensional Jordan algebras with three-dimensional radical of
type $(2,1)$ and $N=N_{1}$.}
\end{table}

\item $N=N_{\frac{1}{2}}$. Then $N^{2}\subseteq N_{0}\oplus N_{1}=0$,
but then $N$ would not have nilpotency type $(2,1)$. \vspace{0.2cm}

\item $N=N_{0}\oplus N_{\frac{1}{2}}$, $\dim N_{0}=2$. By Lemma \ref{lemma}
and \eqref{eq:pierce_one}, $N^{2}\subseteq N_{0}$. Consequently,
there exists $n_{2}\in N_{0}$ such that $N^{2}=\ka n_{2}$, choosing
some $n_{1}\in N_{\frac{1}{2}}$ and $n_{3}\in N_{0}$ we obtain $N_{0}=\ka n_{2}+\ka n_{3}$
with 
\begin{gather*}
n_{2}^{2},\ n_{2}n_{3}\in N^{3}=0,\ \ n_{1}n_{2}=n_{1}n_{3}=0\mbox{ by Lemma \ref{lemma}},\\
n_{3}^{2}=\alpha n_{2}\mbox{ and }n_{1}^{2}=\beta n_{2}\mbox{ since both belong to \ensuremath{N^{2}}}.
\end{gather*}
Depending on the values of $\alpha$ and $\beta$, we obtain the following
algebras:

\begin{table}[H]
\begin{tabular}{|c|>{\centering}m{5cm}|>{\centering}m{1.3cm}|>{\centering}m{1.3cm}|>{\centering}m{1cm}|>{\centering}m{3cm}|}
\hline 
$\J$  & Multiplication Table & $\dim$ $\operatorname*{Aut}(\J)$ & $\dim$ $\operatorname*{Ann}(\J)$ & $\dim$ $\J^{2}$ & Observation\tabularnewline
\hline 
$\mathcal{J}_{44}$ & \vspace{0.2cm}
$\mathcal{T}_{8}\oplus\ka n_{3}$\vspace{0.2cm}
 & $4$ & $2$ & $3$ & $\alpha=0,\beta\neq0$\tabularnewline
\hline 
$\mathcal{J}_{45}$ & \vspace{0.2cm}
$e_{1}^{2}=e_{1}\quad n_{1}^{2}=n_{2}$

$n_{3}^{2}=n_{2}\quad e_{1}n_{1}=\frac{1}{2}n_{1}$\vspace{0.2cm}
 & $3$ & $1$ & $3$ & $\alpha\neq0,\beta\neq0$\tabularnewline
\hline 
$\mathcal{J}_{46}$ & \vspace{0.2cm}
$\mathcal{B}_{2}\oplus\mathcal{B}_{3}$\vspace{0.2cm}
 & $4$ & $1$ & $3$ & $\alpha\neq0,\beta=0$\tabularnewline
\hline 
\end{tabular}

\caption{Four-dimensional Jordan algebras with three-dimensional radical of
type $(2,1)$ and $N=N_{0}\oplus N_{\frac{1}{2}}$ with $\dim N_{0}=2$.}
\end{table}

\noindent Note that if $\alpha=\beta=0$ then $N^{2}=0$. This contradicts
to the fact that $N$ has nilpotency type $(2,1)$.\vspace{0.2cm}

\item $N=N_{0}\oplus N_{1}$, $\dim N_{0}=2$. By Lemma \ref{lemma} and
\eqref{eq:pierce_one}, $N^{2}\subseteq N_{0}$. Consequently, there
exists $n_{2}\in N_{0}$ such that $N^{2}=\ka n_{2}$, choosing some
$n_{1}\in N_{0}$ and $n_{3}\in N_{1}$ we obtain $N_{0}=\ka n_{1}+\ka n_{2}$
with
\begin{gather*}
n_{2}^{2},\ n_{2}n_{1}\in N^{3}=0,\ \ n_{3}^{2}=n_{2}n_{3}=n_{1}n_{3}=0\mbox{,}\\
n_{1}^{2}=\alpha n_{2}\mbox{ since \ensuremath{n_{1}^{2}\in N^{2}}}.
\end{gather*}
If $\alpha=0$ then $N^{2}=0$, this contradicts to the fact that
$N$ has nilpotency type $(2,1)$. Thus $\alpha\neq0$ and we obtain
the following associative algebra:

\begin{table}[H]
\begin{tabular}{|c|>{\centering}m{5cm}|>{\centering}m{1.3cm}|>{\centering}m{1.3cm}|>{\centering}m{1cm}|>{\centering}m{3cm}|}
\hline 
$\J$ & Multiplication Table & $\dim$ $\operatorname*{Aut}(\J)$ & $\dim$ $\operatorname*{Ann}(\J)$ & $\dim$ $\J^{2}$  & Observation\tabularnewline
\hline 
$\mathcal{J}_{47}$ & \vspace{0.2cm}
$\mathcal{B}_{1}\oplus\mathcal{B}_{3}$\vspace{0.2cm}
 & $3$ & $1$ & $3$ & associative\tabularnewline
\hline 
\end{tabular}

\caption{Four-dimensional Jordan algebra with three-dimensional radical of
type $(2,1)$ and $N=N_{0}\oplus N_{1}$ with $\dim N_{0}=2$.}
\end{table}

\item $N=N_{0}\oplus N_{\frac{1}{2}}$, $\dim N_{\frac{1}{2}}=2$. Consider
$n_{1}\in N_{0}$, $n_{2},\mbox{ }n_{3}\in N_{\frac{1}{2}}$. By Lemma
\ref{lemma} it follows that $n_{1}^{2}=0$. On the other hand $n_{2}^{2}\mbox{, }n_{3}^{2},\mbox{ }n_{2}n_{3}\in N_{\frac{1}{2}}^{2}\subseteq N_{0}$
thus $n_{2}^{2}=\alpha n_{1}$, $n_{3}^{2}=\beta n_{1}$ and $n_{2}n_{3}=\gamma n_{1}$.
Since $n_{1}n_{2},\mbox{ }n_{1}n_{3}\in N_{0}N_{\frac{1}{2}}\subseteq N_{\frac{1}{2}},$
we have $n_{1}n_{2}=\theta n_{2}+\delta n_{3}$ and $n_{1}n_{3}=\theta_{1}n_{2}+\delta_{1}n_{3}$.

Next we check that $\left\{ e,\, n_{1},\, n_{2},\, n_{3}\right\} $
satisfy the Jordan identity \eqref{eq:linearjord} and then we obtain
conditions for the constants. In all the cases the algebras we obtain
are isomorphic to one of the following algebras:

\begin{table}[H]
\begin{tabular}{|c|>{\centering}m{5cm}|>{\centering}m{1.3cm}|>{\centering}m{1.3cm}|>{\centering}m{1cm}|>{\centering}m{3cm}|}
\hline 
$\J$ & Multiplication Table & $\dim$ $\operatorname*{Aut}(\J)$ & $\dim$ $\operatorname*{Ann}(\J)$ & $\dim$ $\J^{2}$ & Observation\tabularnewline
\hline 
$\mathcal{J}_{48}$ & \vspace{0.3cm}
$e_{1}^{2}=e_{1}\quad n_{3}^{2}=n_{1}$

$e_{1}n_{2}=\frac{1}{2}n_{2}\quad e_{1}n_{3}=\frac{1}{2}n_{3}$\vspace{0.3cm}
 & $5$ & $1$ & $4$ & \tabularnewline
\hline 
$\mathcal{J}_{49}$ & \vspace{0.3cm}
$e_{1}^{2}=e_{1}\quad n_{3}^{2}=n_{1}$

$e_{1}n_{2}=\frac{1}{2}n_{2}\quad e_{1}n_{3}=\frac{1}{2}n_{3}$

$n_{2}n_{3}=n_{1}$\vspace{0.3cm}
 & $4$ & $1$ & $4$ & \tabularnewline
\hline 
$\mathcal{J}_{50}$ & \vspace{0.3cm}
$e_{1}^{2}=e_{1}\quad e_{1}n_{2}=\frac{1}{2}n_{2}$ $e_{1}n_{3}=\frac{1}{2}n_{3}\quad n_{1}n_{2}=n_{3}$\vspace{0.3cm}
 & $5$ & $0$ & $3$ & \tabularnewline
\hline 
\end{tabular}

\caption{Four-dimensional Jordan algebras with three-dimensional radical of
type $(2,1)$ and $N=N_{0}\oplus N_{\frac{1}{2}}$ with $\dim N_{\frac{1}{2}}=2$.}
\end{table}

\item $N=N_{0}\oplus N_{\frac{1}{2}}\oplus N_{1}$, $\dim N_{i}=1$, for
$i=0,\,\frac{1}{2},\,1$. Consider $n_{1}\in N_{\frac{1}{2}}$, $n_{2}\in N_{1}$,
$n_{3}\in N_{0}$. We have the following products in $N$: 
\begin{gather*}
n_{3}^{2}=n_{2}^{2}=n_{1}n_{3}=n_{1}n_{2}=0\mbox{ by Lemma \ref{lemma}},\ \ n_{2}n_{3}\in N_{0}N_{1}=0,\\
n_{1}^{2}=\alpha n_{3}+\beta n_{2}\mbox{ since \ensuremath{n_{1}^{2}\in N_{\frac{1}{2}}^{2}\subseteq N_{0}\oplus N_{1}.}}
\end{gather*}
Depending on the values of $\alpha$ and $\beta$ we obtain the following
algebras:

\begin{table}[H]
\begin{tabular}{|c|>{\centering}m{5cm}|>{\centering}m{1.3cm}|>{\centering}m{1.3cm}|>{\centering}m{1cm}|>{\centering}m{3cm}|}
\hline 
$\J$ & Multiplication Table & $\dim$ $\operatorname*{Aut}(\J)$  & $\dim$ $\operatorname*{Ann}(\J)$  & $\dim$ $\J^{2}$  & Observation\tabularnewline
\hline 
$\mathcal{J}_{51}$ & \vspace{0.1cm}
$\mathcal{T}_{9}\oplus\ka n_{3}$\vspace{0.1cm}
 & $3$ & $1$ & $3$ & $\alpha=0$

$\beta\neq0$\tabularnewline
\hline 
$\mathcal{J}_{52}$ & \vspace{0.1cm}
$e_{1}^{2}=e_{1}\quad n_{1}^{2}=n_{3}$

$e_{1}n_{2}=n_{2}\quad e_{1}n_{1}=\frac{1}{2}n_{1}$\vspace{0.1cm}
 & $3$ & $1$ & $4$ & $\alpha\neq0$

$\beta=0$\tabularnewline
\hline 
$\mathcal{J}_{53}$ & \vspace{0.1cm}
$e_{1}^{2}=e_{1}\quad n_{1}^{2}=n_{3}+n_{2}$

$e_{1}n_{2}=n_{2}\quad e_{1}n_{1}=\frac{1}{2}n_{1}$\vspace{0.1cm}
 & $2$ & $1$ & $4$ & $\alpha\neq0$

$\beta\neq0$\tabularnewline
\hline 
\end{tabular}

\caption{Four-dimensional Jordan algebras with three-dimensional radical of
type $(2,1)$ and $N=N_{0}\oplus N_{\frac{1}{2}}\oplus N_{1}$.}
\end{table}

Note that if $\alpha=\beta=0$ then $N$$^{2}=0$ which is a contradiction
to the fact that $N$ has nilpotency type $(2,1)$.

\item $N=N_{0}\oplus N_{1}$, $\dim N_{1}=2$. By Lemma \ref{lemma} and
\eqref{eq:pierce_one} it follows that $N^{2}\subseteq N_{1}$. Consequently,
there exists $n_{2}\in N_{1}$ such that $N^{2}=\ka n_{2}$, choosing
some $n_{1}\in N_{1}$ and $n_{3}\in N_{0}$ we obtain $N_{1}=\ka n_{1}+\ka n_{2}$
with
\begin{gather*}
n_{2}^{2},\ n_{1}n_{2}\in N^{3}=0,\ \ n_{3}^{2}=n_{2}n_{3}=n_{1}n_{3}=0,\\
n_{1}^{2}=\alpha n_{2}\mbox{ since \ensuremath{n_{1}^{2}\in N^{2}}}.
\end{gather*}
If $\alpha=0$ then $N^{2}=0$ which is a contradiction to the fact
that $N$ has nilpotency type $(2,1)$. Consequently $\alpha\neq0$
and we obtain the following algebra:

\begin{table}[H]
\begin{tabular}{|c|>{\centering}m{5cm}|>{\centering}m{1.3cm}|>{\centering}m{1.3cm}|>{\centering}m{1cm}|>{\centering}m{3cm}|}
\hline 
$\J$ & Multiplication Table & $\dim$ $\operatorname*{Aut}(\J)$ & $\dim$ $\operatorname*{Ann}(\J)$ & $\dim$ $\J^{2}$ & Observation\tabularnewline
\hline 
$\mathcal{J}_{54}$  & \vspace{0.1cm}
$\mathcal{T}_{1}\oplus\ka n_{3}$\vspace{0.1cm}
 & $3$ & $1$ & $3$ & associative\tabularnewline
\hline 
\end{tabular}

\caption{Four-dimensional Jordan algebra with three-dimensional radical of
type $(2,1)$ and $N=N_{0}\oplus N_{1}$ with $\dim N_{1}=2$.}
\end{table}

\item $N=N_{\frac{1}{2}}\oplus N_{1}$, $\dim N_{1}=2$. By Lemma \ref{lemma}
and \eqref{eq:pierce_one} it follows that $N^{2}\subseteq N_{1}$.
Consequently there exists $n_{2}\in N_{1}$ such that $N^{2}=\ka n_{2}$,
choosing some $n_{1}\in N_{1}$ and $n_{3}\in N_{\frac{1}{2}}$ we
obtain $N_{1}=\ka n_{1}+\ka n_{2}$ with: $\dot{n_{2}^{2},\ n_{2}n_{1}\in N^{3}=0}$,
$n_{2}n_{3}=n_{1}n_{3}=0$ by Lemma \ref{lemma}, $n_{1}^{2}=\alpha n_{2}$
and $n_{3}^{2}=\beta n_{2}$ since both belong to $N^{2}$.

Depending on the values of $\alpha$ and $\beta$ we obtain the following
algebras:

\begin{table}[H]
\begin{tabular}{|c|>{\centering}m{5cm}|>{\centering}m{1.3cm}|>{\centering}m{1.3cm}|>{\centering}m{1cm}|>{\centering}m{3cm}|}
\hline 
$\J$ & Multiplication Table & dim $\operatorname*{Aut}(\J)$ & $\dim$ $\operatorname*{Ann}(\J)$ & $\dim$ $\J^{2}$ & Observation\tabularnewline
\hline 
$\mathcal{J}_{55}$ & \vspace{0.05cm}
$e_{1}^{2}=e_{1}\quad n_{3}^{2}=n_{2}$

$e_{1}n_{1}=n_{1}$

$e_{1}n_{2}=n_{2}\quad e_{1}n_{3}=\frac{1}{2}n_{3}$\vspace{0.05cm}
 & $4$ & $0$ & $4$ & $\alpha=0$

$\beta\neq0$\tabularnewline
\hline 
$\mathcal{J}_{56}$ & \vspace{0.05cm}
$e_{1}^{2}=e_{1}\quad n_{1}^{2}=n_{2}$

$e_{1}n_{1}=n_{1}$

$e_{1}n_{2}=n_{2}\quad e_{1}n_{3}=\frac{1}{2}n_{3}$\vspace{0.05cm}
 & $4$ & $0$ & $4$ & $\alpha\neq0$

$\beta=0$\tabularnewline
\hline 
$\mathcal{J}_{57}$ & \vspace{0.05cm}
$e_{1}^{2}=e_{1}\quad n_{1}^{2}=n_{2}$

$n_{3}^{2}=n_{2}\quad e_{1}n_{1}=n_{1}$ $e_{1}n_{2}=n_{2}\quad e_{1}n_{3}=\frac{1}{2}n_{3}$\vspace{0.05cm}
 & $3$ & $0$ & $4$ & $\alpha\neq0$

$\beta\neq0$\tabularnewline
\hline 
\end{tabular}

\caption{Four-dimensional Jordan algebras with three-dimensional radical of
type $(2,1)$ and $N=N_{\frac{1}{2}}\oplus N_{1}$ with $\dim N_{1}=2$.}
\end{table}

Note that if $\alpha=\beta=0$ then $N^{2}=0$. This contradicts to
the fact that $N$ has nilpotency type $(2,1)$.

\item $N=N_{\frac{1}{2}}\oplus N_{1}$, $\dim N_{\frac{1}{2}}=2$. Consider
$n_{1}\in N_{1}$, $n_{2},\mbox{ }n_{3}\in N_{\frac{1}{2}}$. By Lemma
\ref{lemma} it follows that $n_{1}^{2}=0$. On the other hand $n_{2}^{2}\mbox{, }n_{3}^{2},\mbox{ }n_{2}n_{3}\in N_{\frac{1}{2}}^{2}\subseteq N_{1}$
thus $n_{2}^{2}=\alpha n_{1}$, $n_{3}^{2}=\beta n_{1}$ and $n_{2}n_{3}=\gamma n_{1}$.
Since $n_{1}n_{2},\mbox{ }n_{1}n_{3}\in N_{1}N_{\frac{1}{2}}\subseteq N_{\frac{1}{2}}$,
we have $n_{1}n_{2}=\theta n_{2}+\delta n_{3}$ and $n_{1}n_{3}=\theta_{1}n_{2}+\delta_{1}n_{3}$.

Next we check that $\left\{ e_{1},\, n_{1},\, n_{2},\, n_{3}\right\} $
satisfy the Jordan identity \eqref{eq:linearjord} and then we obtain
conditions for the constants. In all the cases the algebras we obtain
are isomorphic to one of the following algebras:

\begin{table}[H]
\begin{tabular}{|c|>{\centering}m{5cm}|>{\centering}m{1.3cm}|>{\centering}m{1.3cm}|>{\centering}m{1cm}|>{\centering}m{3cm}|}
\hline 
$\J$ & Multiplication Table & $\dim$ $\operatorname*{Aut}(\J)$ & $\dim$ $\operatorname*{Ann}(\J)$ & $\dim$ $\J^{2}$ & Observation\tabularnewline
\hline 
$\mathcal{J}_{58}$ & \vspace{0.05cm}
$e_{1}^{2}=e_{1}\quad n_{3}^{2}=n_{1}$

$e_{1}n_{1}=n_{1}$

$e_{1}n_{2}=\frac{1}{2}n_{2}\quad e_{1}n_{3}=\frac{1}{2}n_{3}$\vspace{0.05cm}
 & $5$ & $0$ & $4$ & \tabularnewline
\hline 
$\mathcal{J}_{59}$ & \vspace{0.05cm}
$e_{1}^{2}=e_{1}\quad n_{3}^{2}=n_{1}$

$e_{1}n_{1}=n_{1}\quad e_{1}n_{2}=\frac{1}{2}n_{2}$

$e_{1}n_{3}=\frac{1}{2}n_{3}\quad n_{2}n_{3}=n_{1}$\vspace{0.05cm}
 & $4$ & $0$ & $4$ & \tabularnewline
\hline 
$\mathcal{J}_{60}$ & \vspace{0.05cm}
$e_{1}^{2}=e_{1}\quad e_{1}n_{1}=n_{1}$ $e_{1}n_{2}=\frac{1}{2}n_{2}\quad e_{1}n_{3}=\frac{1}{2}n_{3}$
$n_{1}n_{2}=n_{3}$\vspace{0.05cm}
 & $5$ & $0$ & $4$ & \tabularnewline
\hline 
\end{tabular}

\caption{Four-dimensional Jordan algebras with three-dimensional radical of
type $(2,1)$ and $N=N_{\frac{1}{2}}\oplus N_{1}$ with $\dim N_{\frac{1}{2}}=2$.}
\end{table}

\end{enumerate}

\subsection{Nilpotent Jordan algebras}

Four-dimensional nilpotent Jordan algebras over the field of complex
numbers were described in \cite{ancocheabermudes}. Observe that this
classification is still valid for any algebraically closed field of
$\car\neq2$. We have the following pairwise nonisomorphic algebras:

\begin{table}[H]
\begin{tabular}{|c|>{\centering}m{5cm}|>{\centering}m{1.3cm}|>{\centering}m{1.3cm}|>{\centering}m{1cm}|>{\centering}m{4.5cm}|}
\hline 
$\J$ & Multiplication Table & $\dim$ $\operatorname*{Aut}(\J)$ & $\dim$ $\operatorname*{Ann}(\J)$ & $\dim$ $\J^{2}$ & Observation\tabularnewline
\hline 
$\mathcal{J}_{61}$ & \vspace{0.2cm}
$n_{1}^{2}=n_{2}\quad n_{2}^{2}=n_{4}$

$n_{1}n_{2}=n_{3}\quad n_{1}n_{3}=n_{4}$\vspace{0.2cm}
 & $4$ & $1$ & $3$ & associative, 

nilpotency type $(1,1,1,1)$\tabularnewline
\hline 
$\mathcal{J}_{62}$ & \vspace{0.2cm}
$n_{1}^{2}=n_{2}\quad n_{4}^{2}=n_{2}$

$n_{1}n_{2}=n_{3}$\vspace{0.2cm}
 & $3$ & $1$ & $2$ & nilpotency type $(2,1,1)$\tabularnewline
\hline 
$\mathcal{J}_{63}$ & \vspace{0.2cm}
$n_{1}^{2}=n_{2}\quad n_{4}^{2}=-n_{2}-n_{3}$ $n_{1}n_{2}=n_{3}\quad n_{2}n_{4}=n_{3}$\vspace{0.2cm}
 & $4$ & $1$ & $2$ & nilpotency type $(2,1,1)$\tabularnewline
\hline 
$\mathcal{J}_{64}$ & \vspace{0.2cm}
$n_{1}^{2}=n_{2}\quad n_{4}^{2}=-n_{2}$ $n_{1}n_{2}=n_{3}\quad n_{2}n_{4}=n_{3}$\vspace{0.2cm}
 & $5$ & $1$ & $2$ & nilpotency type $(2,1,1)$\tabularnewline
\hline 
$\mathcal{J}_{65}$ & \vspace{0.2cm}
$n_{1}^{2}=n_{2}\quad n_{1}n_{2}=n_{3}$

$n_{2}n_{4}=n_{3}$\vspace{0.2cm}
 & $4$ & $1$ & $2$ & nilpotency type $(2,1,1)$\tabularnewline
\hline 
$\mathcal{J}_{66}$ & \vspace{0.2cm}
$n_{1}^{2}=n_{2}\quad n_{4}^{2}=n_{3}$

$n_{1}n_{2}=n_{3}$\vspace{0.2cm}
 & $5$ & $1$ & $2$ & associative, 

nilpotency type $(2,1,1)$\tabularnewline
\hline 
$\mathcal{J}_{67}$ & $\mathcal{T}_{3}\oplus\ka n_{4}$ & $6$ & $2$ & $2$ & associative, 

nilpotency type $(2,1,1)$\tabularnewline
\hline 
$\mathcal{J}_{68}$ & $\mathcal{B}_{3}\oplus\mathcal{B}_{3}$ & $6$ & $2$ & $2$ & associative, 

nilpotency type $(2,2)$\tabularnewline
\hline 
$\mathcal{J}_{69}$ & \vspace{0.2cm}
$n_{1}^{2}=n_{2}$

$n_{1}n_{3}=n_{4}$\vspace{0.2cm}
 & $7$ & $2$ & $2$ & associative, 

nilpotency type $(2,2)$\tabularnewline
\hline 
$\mathcal{J}_{70}$ & \vspace{0.2cm}
$n_{1}^{2}=n_{2}$

$n_{3}n_{4}=n_{2}$\vspace{0.2cm}
 & $7$ & $1$ & $1$ & associative, 

nilpotency type $(3,1)$\tabularnewline
\hline 
$\mathcal{J}_{71}$ & $\mathcal{T}_{4}\oplus\ka n_{4}$ & $8$ & $2$ & $1$ & associative, 

nilpotency type $(3,1)$\tabularnewline
\hline 
$\mathcal{J}_{72}$ & $\mathcal{B}_{3}\oplus\ka n_{3}\oplus\ka n_{4}$ & $10$ & $3$ & $1$ & associative, 

nilpotency type $(3,1)$\tabularnewline
\hline 
$\mathcal{J}_{73}$ & $\ka n_{1}\oplus\ka n_{2}\oplus\ka n_{3}\oplus\ka n_{4}$ & $16$ & $4$ & $0$ & associative, 

nilpotency type $(4)$\tabularnewline
\hline 
\end{tabular}

\caption{Four-dimensional nilpotent Jordan algebras.}
\end{table}

\section{Remarks\label{sec:Remarks}}

\subsection{All algebras in the previous section are pairwise nonisomorphic.}

Comparing the algebra invariants in Section \ref{sec:4-Jordan-algebras},
namely $\dim\operatorname*{Ann}(\J)$, $\dim\operatorname*{Aut}(\J)$
and $\dim\J^{2}$, together with properties whether $\J$ is indecomposable,
associative, nonassociative, unitary, one needs only to verify whether
there exist isomorphisms between $\J_{55}$,$\mbox{ }\J_{56}$,$\mbox{ }\J_{59}$
and whether there exists isomorphism between $\J_{58}$ and $\J_{60}$.

First observe that the two-dimensional nonassociative Jordan algebra
$\mathcal{B}_{2}$ is a subalgebra of $\J_{56}$ but it is not a subalgebra
of $\J_{55}$, then $\J_{55}\not\simeq\J_{56}$.

Further, the second cohomology group $H^{2}(\J_{59},\J_{59})=0$ while
both $H^{2}(\J_{55},\J_{55})$ and $H^{2}(\J_{56},\J_{56})$ are not
zero, thus we have $\J_{59}\not\simeq\J_{55}$ and $\J_{59}\not\simeq\J_{56}$.
For the definition of this cohomology group for Jordan algebras we
refer to \cite[Chap.II, sect. 8]{jacobson}.

Finally, $\operatorname*{Rad}(\J_{58})=\mathcal{B}_{3}\oplus\ka n_{2}$
while $\operatorname*{Rad}(\J_{60})=\mathcal{T}_{4}$, therefore $\J_{58}\not\simeq\J_{60}$.

\subsection{All algebras in the previous sections are special.}

In 1979, A. M. Slin'ko in \cite{slinko} proved that nilpotent Jordan
algebras up to dimension $5$ are special and in 1989, H. Sherkulov
in \cite{TeseRuso} showed that nonassociative Jordan algebras up
to dimension $4$ are special.

\section{Acknowledgement}

The author is grateful to Iryna Kashuba and Daniel Miranda for their
careful reading of the paper and for many useful comments.

The author was supported by the CAPES scholarship for PhD program
of Mathematics, IME-USP.


\begin{thebibliography}{AFM}
{\normalsize \bibitem[AFM]{ancocheabermudes}ANCOCHEA BERMÚDEZ, J.
M.; FRESÁN, J. \& MARGALEF BENTABOL, J.; }\emph{\normalsize Contractions
of Low-Dimensional Nilpotent Jordan Algebras}{\normalsize . Communications
in Algebra, 39:3, p. 1139-1151, 2011.}{\normalsize \par}

{\normalsize \bibitem[G]{finiterepresentatiotypeisopen}GABRIEL, P.;
}\textit{\normalsize Finite Representation Type is Open}{\normalsize .
Lecture Notes in Mathematics 488, p. 132-155, 1975.}{\normalsize \par}

{\normalsize \bibitem[J]{jacobson}JACOBSON, N.; }\emph{\normalsize Structure
and Representations of Jordan Algebras}{\normalsize . AMS Colloq.
Publ. 39, AMS, Providence, 1968.}{\normalsize \par}

{\normalsize \bibitem[KS]{irynashesta}KASHUBA, I.; SHESTAKOV, I.;
}\emph{\normalsize Jordan Algebras of Dimension Three: Geometric Classification
and Representation Type }{\normalsize . Biblioteca de la Revista Matemática
Iberoamericana. Actas del XVI Coloquio Latinoamericano de Álgebra,
Colonia del Sacramento, Uruguay, p. 1-21, 2005.}{\normalsize \par}

{\normalsize \bibitem[KN]{kirillov}KIRILLOV, A. A.; NERETIN YU, A.;}\emph{\normalsize{}
The variety $A_{n}$ of $n$-dimensional Lie algebra structures}{\normalsize .
Transl., II Ser, American Math Soc n.137, p. 21-30, 1987.}{\normalsize \par}

{\normalsize \bibitem[H]{happel} HAPPEL, D.;}\emph{\normalsize{} Deformation
of five dimensional algebras}{\normalsize . Lect. Notes in Pure and
Appl. Math. n.51, p.459-494, 1979.}{\normalsize \par}

{\normalsize \bibitem[M]{mazzola}MAZZOLA, G.; }\emph{\normalsize The
Algebraic and Geometric Classification of Associative Algebras of
Dimension Five}{\normalsize . Manuscripta Math., n. 27, p. 81-101,
1979.}{\normalsize \par}

{\normalsize \bibitem[S]{TeseRuso}SHERKULOV, H.; }\emph{\normalsize Jordan
Algebras in Small Dimensions}{\normalsize . Master Theses, University
of Novosibirsk. 1989.}{\normalsize \par}

{\normalsize \bibitem[SL]{slinko}SLIN'KO; A.M.; }\textit{\normalsize Special
varieties of Jordan algebras.}{\normalsize{} Matematieheskie Zametki,
Vol. 26, No. 3, pp. 337\textendash{}344, 1979.}{\normalsize \par}

{\normalsize \bibitem[W]{wesseler} WESSELER, H}\emph{\normalsize .;
Der Klassification der Jordan-Algebren niedrigen Dimensionen}{\normalsize .
Staatexamensarbeit für der Lehramt am Gymnasium, Münster, 1978. }\end{thebibliography}
\end{document}